\theoremstyle{plain}
\newtheorem{thm}{Theorem}[section]
\newtheorem{prop}{Proposition}[section]
\newtheorem{lem}{Lemma}[section]
\theoremstyle{definition}
\newtheorem{ass}{Assumption}[section]
\renewcommand{\P}{\mathbb{P}}
\newcommand{\E}{\mathbb{E}}
\newcommand{\N}{\mathbb{N}}
\newcommand{\R}{\mathbb{R}}
\newcommand{\err}{\eta}
\newcommand{\cK}{\mathcal{K}}
\newcommand{\cT}{\mathcal{T}}
\newcommand{\cN}{\mathcal{N}}
\newcounter{todocounter}
\newlength{\todowidthinner}
\DeclareRobustCommand{\MyChange}[3][\empty]{%
  {\color{#2}#3}%                           color changed text
  \ifthenelse{\isempty{#1}}{}%              if option was supplied add it as margin note
  {%
    \addtocounter{todocounter}{1}%
    \ifmmode%
        {\color{#2}\text{$^{\framebox{\arabic{todocounter}}}$}}%
    \else%
        {\color{#2}\text{$^{\arabic{todocounter}}$}}%
    \fi%
    \marginpar{\textcolor{#2}{$^{\arabic{todocounter}}$\textnormal{#1}}}%
  }%
}
\definecolor{lightgray}{rgb}{0.7,0.7,0.7}
\newcommand{\MD}[2][]{\MyChange[#1]{black}{#2}}
\title{Adaptive Multilevel Monte Carlo Methods for Stochastic Variational Inequalities}
\author[1]{Ralf Kornhuber}
\author[1]{Evgenia Youett}
\affil[1]{Freie Universit\"at Berlin\\
Institut f\"ur Mathematik\\
Arnimallee 6\\
14195 Berlin\\
Germany}
\date{\vspace{-5ex}}
\begin{document}
\maketitle
\let\thefootnote\relax\footnote{The authors want to thank Robert Scheichl for stimulating discussions \MD{and the unknown referees for their thorough review
and their valuable suggestions that significantly improved the paper.}
This work was supported by the German Ministry for Education and Research (BMBF)
through grant {\em Wear simulation and shape optimization of knee implants. 
Subproject 4: Uncertainty quantification}
and by the German Research Foundation (DFG) through grant
CRC 1114 {\em Scaling Cascades in Complex Systems}, 
Project B01: {\em Fault networks and scaling properties of
deformation accumulation}.}
\addtocounter{footnote}{-1}

\begin{abstract}
While multilevel Monte Carlo (MLMC) methods for the numerical approximation
of partial differential equations with  random  coefficients enjoy great popularity,
combinations with spatial adaptivity seem to be rare. We present an adaptive
MLMC finite element approach based on deterministic adaptive mesh 
refinement for the arising "pathwise"  problems and outline a convergence theory
in terms of desired accuracy and required computational cost.
Our theoretical and heuristic reasoning together with the efficiency
of our new approach are confirmed by numerical experiments.
\end{abstract}

\section{Introduction}

% UQ
Uncertainty quantification is a  well-established and  flourishing field
in numerical analysis and scientific computing
that connects theoretical challenges with a multitude of practical applications. 
%(cf., e.g., \cite{LeMaitreKnio2010}). 
% MLMC
While stochastic Galerkin approaches (cf., e.g., \cite{BabuvskaNobileTempone2010,BabuskaTemponeZouraris2004,XiuKarnadakis2002})
turned out as methods of choice for low dimensional  uncertainties,
Monte Carlo (MC) type of methods prove advantageous  
for high dimensional, highly nonlinear problems. 
While the classical MC method is very robust and extremely simple,
sampling of stochastic data 
entails the numerical solution of numerous deterministic problems 
which makes performance the main weakness of this approach. 
A big step towards efficiency was made by Giles~\cite{giles2008},
who combined MC with multigrid techniques by introducing suitable hierarchies
of subproblems associated with corresponding mesh hierarchies. 
Since then, multilevel Monte Carlo (MLMC) methods became 
a powerful tool in a variety of applications
and its own field of mathematical research.
%ranging from stochastic differential equations to Markov chains. 
We refer to elliptic problems
with random coefficients~\cite{elpde1, elpde3,  elpde2, elpde4},
random elliptic problems with multiple scales~\cite{multiscale}, 
parabolic random problems \cite{parab},
random elliptic variational inequalities \cite{stochvi},
and to~\cite{Giles2015} for a detailed overview.

% Optimization of mesh hierarchies and parameters
Various approaches have been made to further enhance the efficiency of MLMC.
For a given, quasi-uniform mesh hierarchy,
Collier et al.~\cite{collier2015} and Haji-Ali et al.~\cite{HajiAliNobileSchwerinTempone2016} 
aim at reducing the computational cost of  MLMC by optimizing the 
actual selection of meshes from this hierarchy and other MLMC parameters.

% Adaptivity
Another, in a sense complementary approach to reduce the required computational cost of MLMC 
is to apply adaptive mesh refinement techniques.
% SDEs
Time discretization of an It\^{o} stochastic differential equation
by an self-adaptively chosen hierarchy of time steps has been suggested by Hoel et al.~\cite{HoelSchwerinSzepessyTempone2014,HoelSchwerinSzepessyTempone2012}
and a similar approach was presented by Gerstner and Heinz~\cite{adapt3},
including applications in computational finance.

% PDEs with uncertain coefficients
Less appears to be known for  partial differential equations with random coefficients.
While a posteriori error estimation and adaptive mesh refinement
have quite a history in finite element approximation of deterministic 
partial differential equations (cf., e.g., \cite{AinsworthOden2011,Verfurth1996}), 
related adaptive concepts for MLMC methods seem to be rare.
% pathwise error estimation for PDEs with uncertain coefficients
Only recently, Eigel et al.~\cite{EigelMerdonNeumann2014} suggested an algorithm 
for constructing an adaptively refined  hierarchy of meshes 
based on expectations of ``pathwise'' local error indicators
%for goal oriented error estimation 
and illustrated its properties by numerical experiments.

% Our approach
In this paper, we follow a novel approach,
utilizing a whole family of different pathwise mesh hierarchies 
associated with different MC samples $\omega \in \Omega$. 
More precisely, for a given final tolerance $Tol>0$, 
we choose a sequence of tolerances $Tol_1> \cdots > Tol_L=Tol$
and approximate each of the different pathwise deterministic problems  
arising for each of the different samples $\omega\in \Omega$ on each MLMC level $l$ up to
the accuracy $Tol_l$
by finite elements on a different, adaptively refined ``pathwise'' mesh.
We emphasize that any deterministic refinement strategy can be used for this purpose.
The computation of sample averages is finally performed 
on an inductively constructed global mesh consisting of the union 
of simplices from all these pathwise meshes resulting from the different samples. 

% Our main results
\MD{Based on existing results  
on elliptic variational inequalities~\cite{Gwinner00,KindStamp80} 
and on general MLMC methods~\cite{elpde2,giles2008}, we outline an abstract convergence theory
for adaptive MLMC Galerkin approximations of the expected solution
in an abstract Hilbert space setting.
Error estimates are formulated in terms of the desired accuracy $Tol$ 
and the required computational cost.
Extensions to bounded linear as well as Fr\'echet differentiable functionals
can be obtained from corresponding results in~\cite{elpde3, elpde4}.
The general theory is then applied to MLMC finite element methods.
In the case of uniform refinement
we recover an enhanced version of existing results from~\cite{stochvi} and 
we discuss the assumptions of our abstract  theory 
in light of existing convergent adaptive algorithms for deterministic 
elliptic variational inequalities~\cite{BraessCarstensenHoppe07,SiebertVeeser07}
and optimality results
for linear variational problems~\cite{Binev2004,Carstensen14,Kreuzer2011,Stevenson2007}.
The implementation of the resulting adaptive MLMC finite element methods
is carried out in the software environment {\sc Dune}~\cite{Dune}.
Numerical experiments illustrate our theoretical findings and
the underlying heuristic reasoning.
For problems with highly localized random source term,
we  observe a significant reduction 
of computational cost as compared to uniformly refined meshes.
Optimal bounds for the computational cost are 
observed in all our numerical experiments.
Theoretical justification will be the subject of future research.

The paper is organized as follows.
Section~\ref{sec:varproblem} contains 
the formulation of pathwise elliptic variational inequalities
together with some well-known existence and uniqueness results.
In Section \ref{sec:MMCGM} we present our abstract framework of 
adaptive MLMC Galerkin methods together with error estimates and upper bounds 
for the required computational cost.
These abstract results are  applied to finite element approximations 
in the next Section~\ref{sec:MLMCFEM} 
and numerical experiments are reported in the final Section~\ref{sec:NUM}.
}

\section{A random variational problem} \label{sec:varproblem}
Let $(\Omega, \mathcal{A}, \P)$ be a complete probability space with $\Omega$
denoting a sample space and let $\mathcal{A} \in 2^\Omega$ be
the $\sigma$-algebra of all possible events associated with 
a finite probability measure $\P: \mathcal{A} \to [0,1]$ on $\Omega$.
As usual, $\E[\xi]=\int_\Omega \xi\; d\P$
describes the expectation of a random variable $\xi$ 
and $L^2(\Omega)$ denotes the Hilbert space of square integrable random variables
on~$\Omega$.

For a given separable Hilbert space $H$, 
equipped with the scalar product $(\cdot,\cdot)_H$ 
and the associated norm  $\|\cdot\|_H=(\cdot,\cdot)_H^{1/2}$, 
we introduce the Bochner-type space $L^2(\Omega, \mathcal{A}, \P; H)$ 
of $\P$-measurable mappings $v: \Omega \to H$ with the property 
$\int_\Omega \|v\|^2_H \; d\P(\omega) < \infty$.
We will use the abbreviation $L^2(\Omega; H)=L^2(\Omega, \mathcal{A}, \P; H)$.
It is easily seen that $L^2(\Omega; H)$ is  also a Hilbert space with the 
scalar product 
\[
(v,w)_{L^2(\Omega;H)} = \int_\Omega (v,w)_H \; d\P(\omega), \qquad v,w \in L^2(\Omega;H),
\]
and the associated norm 
$\|  \cdot  \|_{L^2(\Omega;H)}=(\cdot,\cdot)_{L^2(\Omega;H)}^{1/2}$.
The expectation in $L^2(\Omega;H)$ is defined by
\[
\E\left[   v \right] = \int_\Omega v (\omega) \;d \P(\omega)\in H, \qquad v \in L^2(\Omega;H).
\]

Let $a(\omega; \cdot, \cdot)$ and $\ell (\omega; \cdot)$, $\omega \in \Omega$,
denote families of bilinear forms and linear functionals on $H$, respectively.
For a given subset $K\subset H$ and any fixed realization $\omega \in \Omega$,
we consider the ``pathwise'' variational inequality
\begin{equation} \label{path}
u(\omega) \in K: \qquad a(\omega;u(\omega),v-u(\omega))  \geq \ell(\omega;v-u(\omega))   \qquad \forall v \in K.
\end{equation}
Note that in the unconstrained case $K=H$ the inequality \eqref{path} 
can be equivalently rewritten as the variational equality
\begin{equation} \label{pathLIN}
u(\omega) \in H: \qquad a(\omega;u(\omega),v)  =  \ell(\omega;v)   \qquad \forall v \in H.
\end{equation}
\begin{ass} \label{a:VI}
The subset $K$ is non-empty, closed, and convex.
For each realization $\omega \in \Omega$ we have
$\ell (\omega; \cdot)\in H'$  and
$a(\omega; \cdot, \cdot)$ is bounded and coercive in the sense that
\begin{equation}\label{eq:COERCIVE}
 \gamma(\omega)\|v\|_H^2 \leq a(\omega; v,v), \quad 
 a(\omega; v,w)\leq \Gamma(\omega)\|v\|_H \|w\|_H \qquad \forall v,w \in H
\end{equation}
holds with $\gamma(\omega)\geq \gamma_0 >0$ a.e.\ in $\Omega$, and $\Gamma\in L^{\infty}(\Omega)$.
For all fixed $v$, $w\in H$ the mappings $a(\cdot;v,w)$, $\ell(\cdot;v)$
are measurable
and $\ell \in L^2(\Omega;H')$. 
\end{ass}
Assumption~\ref{a:VI} yields existence, uniqueness, and regularity of pathwise solutions
(cf., e.g., \cite[Theorem 2.1]{KindStamp80} and \cite[Proposition 1.2]{Gwinner00}).

\begin{prop} \label{prop:EXIUNIREG}
Let Assumption~\ref{a:VI} hold. Then 
the pathwise problem \eqref{path} admits a
unique solution  for each $\omega \in \Omega$,
the solution map $u: \Omega \mapsto H$ 
is measurable with respect to the Borel $\sigma$-algebra in $H$,
and  $u \in L^2(\Omega;H)$.
\end{prop}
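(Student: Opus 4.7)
The plan is to handle the three claims in order: pathwise existence and uniqueness, measurability, and $L^2$-integrability.

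For the first claim, I would invoke the classical Lions--Stampacchia theorem for each fixed $\omega \in \Omega$. Assumption~\ref{a:VI} provides exactly its hypotheses: $K$ is non-empty, closed, and convex, $a(\omega;\cdot,\cdot)$ is a bounded and coercive bilinear form on $H$, and $\ell(\omega;\cdot) \in H'$. This directly yields a unique $u(\omega)\in K$ solving \eqref{path}. No approximation or passage to the limit is needed here.

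For measurability, which I expect to be the main technical obstacle, my plan is to exhibit $u$ as the pointwise limit of measurable approximants obtained from the projection-iteration characterization of the Lions--Stampacchia solution. Let $A(\omega): H\to H'$ and $f(\omega)\in H'$ be the Riesz-type representatives of $a(\omega;\cdot,\cdot)$ and $\ell(\omega;\cdot)$. Choose a deterministic step size $\rho \in (0, 2\gamma_0/\|\Gamma\|_{L^\infty(\Omega)}^2)$; because of the uniform bounds $\gamma(\omega)\geq \gamma_0$ and $\Gamma(\omega)\leq \|\Gamma\|_{L^\infty(\Omega)}$, the map
\[
T_\omega: v \mapsto P_K\bigl(v - \rho(A(\omega)v - f(\omega))\bigr)
\]
is a contraction on $H$ with a contraction factor uniformly bounded away from $1$ almost everywhere in $\Omega$. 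Starting from a fixed $v_0 \in K$, I would define iterates $u_{k+1}(\omega)=T_\omega(u_k(\omega))$ and verify by induction that each $u_k$ is strongly measurable: the measurability of $\omega \mapsto A(\omega)v$ and $\omega \mapsto f(\omega)$ for fixed $v$ follows from the measurability assumption on $a(\cdot;v,w)$ and $\ell(\cdot;v)$, and composition with the Lipschitz continuous projection $P_K$ preserves measurability. Since $u_k(\omega) \to u(\omega)$ in $H$ for a.e.\ $\omega$, measurability of $u$ as a limit of measurable maps follows. If a more direct argument is desired, one can alternatively use a Galerkin projection onto an increasing sequence of finite-dimensional subspaces whose union is dense in $H$ (which exists since $H$ is separable), establish measurability of each finite-dimensional Galerkin solution via Cramer-type explicit formulas, and pass to the limit using the standard $H$-convergence of Galerkin approximations for coercive VIs.

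For the $L^2$-bound, I would test \eqref{path} with an arbitrary fixed $v_0 \in K$, yielding
\[
\gamma(\omega)\|u(\omega)\|_H^2 \leq a(\omega;u(\omega),u(\omega)) \leq a(\omega;u(\omega),v_0) + \ell(\omega;u(\omega)-v_0),
\]
and then apply boundedness of $a$, the definition of the dual norm, and Young's inequality to absorb $\|u(\omega)\|_H$ into the left-hand side. Using the pathwise bounds $\gamma(\omega)\geq \gamma_0$ and $\Gamma(\omega) \leq \|\Gamma\|_{L^\infty(\Omega)}$, this yields an estimate of the form
\[
\|u(\omega)\|_H^2 \leq C\bigl(\|v_0\|_H^2 + \|\ell(\omega)\|_{H'}^2\bigr)
\]
with a deterministic constant $C$. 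Integrating over $\Omega$ and using $\ell \in L^2(\Omega;H')$ from Assumption~\ref{a:VI} gives $u\in L^2(\Omega;H)$.

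The main obstacle is the measurability step, because the solution operator of a variational inequality is nonlinear and only implicitly defined. The contraction-iteration approach outlined above is the cleanest way I know to reduce it to measurability of compositions of Lipschitz maps with the measurable data $A(\omega)$ and $f(\omega)$.
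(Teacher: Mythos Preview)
Your argument is correct. The paper does not supply a proof but defers to \cite[Theorem~2.1]{KindStamp80} for pathwise existence and uniqueness and to \cite[Proposition~1.2]{Gwinner00} for measurability and square-integrability; your sketch essentially reconstructs what those references provide, with the Lions--Stampacchia contraction iteration being a standard route to the measurability claim.

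One detail worth spelling out in the induction step: concluding that $\omega \mapsto A(\omega)u_k(\omega)$ is measurable requires more than measurability of $\omega\mapsto A(\omega)v$ for each fixed $v$, since $u_k(\omega)$ itself varies with $\omega$. You should use that $(\omega,v)\mapsto A(\omega)v$ is a Carath\'eodory map (measurable in $\omega$ for fixed $v$ by Pettis' theorem and separability of $H$, and Lipschitz in $v$ with constant $\|\Gamma\|_{L^\infty(\Omega)}$), so that composition with the strongly measurable $u_k$ is again measurable; equivalently, approximate $u_k$ by simple functions and use the uniform operator bound to pass to the limit. With this filled in, your contraction-iteration argument and the $L^2$-bound via testing against a fixed $v_0\in K$ are both sound.
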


Note that $u \in L^2(\Omega;H)$ implies $\E\left[   u \right]\in H$.
It also follows that 
\[
u\in \cK = \{v \in L^2(\Omega; H)\;|\; v(\omega)\in K \text{ a.e. in }\Omega\} \subset  L^2(\Omega;H)
\]
is the unique solution of the ``mean-square'' variational inequality

\begin{equation} \label{mean}
 u \in \cK:\qquad \E\left[ a(\cdot;u,v-u) \right]  \geq  \E\left[ \ell(\cdot;v-u) \right]   \qquad \forall v \in \cK.
 \end{equation} 

To fix the ideas, we will often concentrate on the bilinear form 
\[
a(\omega; v,w)=\int_D \alpha(x,\omega)\nabla v \cdot \nabla w \; dx
\]
and the functional 
\[
\ell(\omega;v)=\int_D f(x,\omega)\; dx
\]
on the Sobolev space $H=H_0^1(D)$ of weakly differentiable functions 
defined on a Lipschitz domain $D\in \R^d$, $d=1,2,3$, and the subset
\begin{equation} \label{eq:OBSTACLECONSTRAINTS}
K=\{v \in H\;|\; v(x) \geq 0 \text{ a.e. in }D \}.
\end{equation}

Note that random obstacles $\chi\in L^2(\Omega; H_0^1(D))$
can be traced back to the case \eqref{eq:OBSTACLECONSTRAINTS} 
by introducing the new variable $w=u-\chi$.
For a detailed discussion of sufficient conditions on the coefficient $\alpha$ and
the right hand side $f$
for existence and uniqueness of pathwise solutions, 
we refer to Section~\ref{sec:MLMCFEM}.

The remainder of this paper is devoted to the efficient approximation of 
the  expectation  $\E\left[ u \right]$
of the family of pathwise solutions  $u(\omega)$, $\omega \in \Omega$, of \eqref{path}.

\section{\MD{Adaptive} Multilevel Monte Carlo Galerkin  methods} \label{sec:MMCGM}
For given initial tolerance $0 < Tol_1 < 1$ and reduction factor $q<1$
we define a sequence of tolerances by 
\begin{equation} \label{eq:MLTOL}
Tol_l = q Tol_{l-1},\quad l=2,\dots, L,
\end{equation}
with the final desired accuracy $Tol = Tol_L$.
For each $\omega \in \Omega$ we choose an associated  
hierarchy of subspaces $S_l(\omega)\subset H$, i.e.,
\begin{equation} \label{eq:NESTSEQ}
 S_1(\omega) \subset S_2(\omega) \subset \cdots \subset S_L(\omega)\subset H,
\end{equation}
with finite dimensions $N_l(\omega)$ 
and non-empty, closed, convex subsets $K_l(\omega) \subset S_l(\omega)$,
$l = 1,\dots, L$.
We consider the family of pathwise Galerkin approximations
\begin{equation} \label{eq:MULTIGALERKIN}
 u_l(\omega) \in K_l(\omega): \quad a(\omega;u_l(\omega),v-u_l(\omega))  
 \geq \ell(\omega;v-u_l(\omega))  \quad \forall v \in K_l(\omega),\qquad \omega \in \Omega.
\end{equation}

\begin{ass} \label{a:M1}
For all $l=1,\dots, L$
the set-valued map $\Omega \ni \omega \mapsto K_l(\omega) \in H$ is measurable
and there is a $w_l\in L^2(\Omega; H)$ such that
$w_l(\omega)\in K_l(\omega)$ holds for all $\omega \in \Omega$.
\end{ass}

In combination with Assumption~\ref{a:VI}, the Assumption~\ref{a:M1} yields existence,
uniqueness, and regularity of approximate pathwise solutions
(cf., e.g.,  \cite[Theorem 2.3 and 2.7]{GwinnerRaciti06}).

\begin{prop} \label{lem:EXUREG}
Let the Assumptions~\ref{a:VI} and \ref{a:M1} hold.
Then there is a unique solution
$u_l(\omega) \in K_l(\omega)$ of \eqref{eq:MULTIGALERKIN} for each $l=1,\dots,L$ and $\omega \in \Omega$,
the discretized solution map  $u_l: \Omega \mapsto S_l(\omega)\subset H$
is measurable, and $u_l\in L^2(\Omega;H)$.
\end{prop}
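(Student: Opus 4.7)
The plan is to reduce the statement to a pathwise application of a standard well-posedness result for variational inequalities, followed by a measurable-selection argument, followed by an a priori bound obtained by testing with the existing $w_l$. The three conclusions (pointwise existence/uniqueness, measurability, and $L^2$-integrability) parallel those of Proposition~\ref{prop:EXIUNIREG}, so the strategy is essentially to repeat that argument with $H$ replaced by the closed finite-dimensional subspace $S_l(\omega)$ and $K$ replaced by $K_l(\omega)$.

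First, for each fixed $\omega\in\Omega$, I would observe that $K_l(\omega)$ is non-empty (it contains $w_l(\omega)$), closed, and convex as a subset of the Hilbert space $H$, while $a(\omega;\cdot,\cdot)$ remains bounded and coercive on $S_l(\omega)\times S_l(\omega)$ with the same constants $\gamma(\omega)$, $\Gamma(\omega)$ from Assumption~\ref{a:VI}, and $\ell(\omega;\cdot)$ still defines a continuous linear functional on $S_l(\omega)$. Applying the Lions--Stampacchia theorem exactly as in the proof of Proposition~\ref{prop:EXIUNIREG} yields a unique solution $u_l(\omega)\in K_l(\omega)$ of \eqref{eq:MULTIGALERKIN}. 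For the measurability of $\omega\mapsto u_l(\omega)$, the constraint set itself now depends on $\omega$, so the plan is to invoke a measurable-selection argument of Castaing--Valadier type along the lines of~\cite{GwinnerRaciti06}: the measurability of the set-valued map $\omega\mapsto K_l(\omega)$ from Assumption~\ref{a:M1}, combined with the pointwise measurability of $a(\cdot;v,w)$ and $\ell(\cdot;v)$ from Assumption~\ref{a:VI} and the pointwise uniqueness just established, forces the unique solution map to be Borel measurable into $H$.

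For $u_l\in L^2(\Omega;H)$, the natural strategy is to test \eqref{eq:MULTIGALERKIN} with the admissible $v=w_l(\omega)\in K_l(\omega)$ and exploit coercivity. This produces
\[
\gamma(\omega)\|u_l(\omega)\|_H^2 \leq \Gamma(\omega)\|u_l(\omega)\|_H\|w_l(\omega)\|_H + \|\ell(\omega;\cdot)\|_{H'}\bigl(\|u_l(\omega)\|_H+\|w_l(\omega)\|_H\bigr).
\]
A standard Young-inequality absorption yields
\[
\|u_l(\omega)\|_H^2 \leq C\bigl(\|w_l(\omega)\|_H^2 + \|\ell(\omega;\cdot)\|_{H'}^2\bigr)
\]
with $C$ depending only on $\gamma_0$ and $\|\Gamma\|_{L^{\infty}(\Omega)}$. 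Integrating over $\Omega$ and using $w_l\in L^2(\Omega;H)$ and $\ell\in L^2(\Omega;H')$ from Assumptions~\ref{a:VI} and~\ref{a:M1} then gives $u_l\in L^2(\Omega;H)$.

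The main obstacle I anticipate is the measurability step, since it requires machinery beyond the Hilbert-space variational theory used for existence and the $L^2$ bound; here I would lean entirely on the Gwinner--Raciti framework already invoked by the authors rather than redo the selection argument from scratch.
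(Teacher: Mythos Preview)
Your proposal is correct and follows essentially the same route as the paper, which does not spell out a proof but simply cites \cite[Theorem~2.3 and~2.7]{GwinnerRaciti06} for the measurability and integrability in the presence of an $\omega$-dependent constraint set. Your explicit Lions--Stampacchia argument for pointwise well-posedness and the a priori bound obtained by testing with $w_l(\omega)$ are exactly the ingredients behind those cited results.
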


Before we approximate the expectation $\E[u]$ 
in terms of (approximations of) $u_l(\omega)$, $\omega \in \Omega$,
let us state some assumptions on  $u_l(\omega)$ and thus 
implicitly on the approximating family of spaces $S_l(\omega)$.

\begin{ass} \label{a:M2}
For all $l=1,\dots,L$ the family $u_l(\omega)$, $\omega \in \Omega$, 
satisfies the discretization error estimate
\begin{equation} \label{eq:PMLACC}
 \|u - u_l\|_{L^2(\Omega;H)} \leq {\textstyle \frac{1}{2\sqrt{2}}} Tol_l.
\end{equation}
\end{ass}

In general, the exact solution $u_l(\omega)$ of  variational inequality \eqref{eq:MULTIGALERKIN} is not available 
but can be only approximated up to a certain tolerance by  an iterative solver.

\begin{ass}\label{a:M3}
For all $l=1,\dots,L$ and each $\omega\in \Omega$, an approximate solution 
$\tilde{u}_l(\omega)\in S_l(\omega)$ 
of the pathwise problem \eqref{eq:MULTIGALERKIN} can be computed with accuracy
\begin{equation} \label{eq:ALGACC}
 \|u_l(\omega)- \tilde{u}_l(\omega)\|_H\leq {\textstyle \frac{1}{2\sqrt{2}}} Tol_l,
\end{equation}
\MD{$\tilde{u}_l: \Omega \mapsto S_l(\omega)\subset H$ is measurable, 
and $\tilde{u}_l\in L^2(\Omega;H)$.}
\end{ass}

Then the expectation $\E(u)$ is approximated by 
the inexact multilevel Monte Carlo Galerkin method
\begin{equation}\label{eq:MLMC}
  \E^L[\tilde{u}_L]= \sum_{l=1}^L \E_{M_l}[\tilde{u}_l - \tilde{u}_{l-1}]
 \end{equation}
with $\tilde{u}_{0}=0$ and suitable $(M_l)\in \N^{L}$.
On each level $l$, we utilize the Monte Carlo approximation 
\begin{equation} \label{eq:MC}
 \E_M[v]= \frac{1}{M}\sum_{i=1}^M v_i(\omega), \qquad M \in \N, \quad v\in L^2(\Omega;H),
\end{equation}
of $\E[v]$ by  independent, identically distributed copies $v_i(\omega)$ of $v(\omega)$, $i=1,\dots,M$.

A basic error estimate  for  Monte Carlo methods is stated in the following lemma.
\begin{lem}   \label{mcestimate}
The Monte Carlo approximation \eqref{eq:MC} of the expectation $\E[v]$ satisfies the error estimate
\begin{equation} \label{eq:SINGLEERR}
\|\E[v]-\E_M[v]\|_{L^2(\Omega; H)} = M^{-1/2} V[v]^{1/2}
\end{equation}
denoting 
\begin{equation} \label{eq:VARN}
V[v]= \E[\| \E[v] - v \|^2_H]  \leq \|v\|^2_{L^2(\Omega; H)}.
\end{equation}
\end{lem}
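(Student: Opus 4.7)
The plan is to compute the left-hand side directly by expanding the squared $L^2(\Omega;H)$-norm and exploiting independence of the sample copies $v_i$. Writing $X_i(\omega) := v_i(\omega) - \E[v] \in L^2(\Omega;H)$, each $X_i$ has zero expectation, and the $X_i$ are mutually independent (since the $v_i$ are), with the same distribution as $v - \E[v]$. By definition of $\E_M$,
\[
\E_M[v] - \E[v] = \frac{1}{M}\sum_{i=1}^M X_i,
\]
so
\[
\|\E[v]-\E_M[v]\|^2_{L^2(\Omega;H)} = \frac{1}{M^2} \E\!\left[ \Big\|\sum_{i=1}^M X_i \Big\|_H^2 \right] = \frac{1}{M^2}\sum_{i,j=1}^M \E\big[(X_i,X_j)_H\big].
\]

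The key step is to show that the off-diagonal terms vanish. For $i \neq j$, the $H$-valued random variables $X_i$ and $X_j$ are independent and centered; by expanding the scalar product against an orthonormal basis $\{e_k\}$ of $H$ (or, equivalently, by invoking that the tensor product splits expectations of independent integrable random variables), one obtains $\E[(X_i,X_j)_H] = (\E[X_i],\E[X_j])_H = 0$. For $i = j$, $\E[\|X_i\|_H^2] = V[v]$ by definition. Summing the $M$ diagonal contributions gives
\[
\|\E[v]-\E_M[v]\|^2_{L^2(\Omega;H)} = \frac{M\, V[v]}{M^2} = \frac{V[v]}{M},
\]
and taking square roots yields the claimed identity \eqref{eq:SINGLEERR}.

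For the auxiliary bound in \eqref{eq:VARN}, I would simply expand
\[
V[v] = \E\big[\|v - \E[v]\|_H^2\big] = \|v\|^2_{L^2(\Omega;H)} - 2\,(\E[v],\E[v])_H + \|\E[v]\|_H^2 = \|v\|^2_{L^2(\Omega;H)} - \|\E[v]\|_H^2,
\]
which is at most $\|v\|^2_{L^2(\Omega;H)}$ since $\|\E[v]\|_H^2 \geq 0$.

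The only mildly delicate point is the orthogonality argument $\E[(X_i,X_j)_H]=0$ for independent centered Bochner-integrable $X_i,X_j$; this is the single place where one must be careful beyond a purely scalar Monte Carlo computation, but it follows at once from separability of $H$ and Fubini. Everything else reduces to algebraic expansion.
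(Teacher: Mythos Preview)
Your proof is correct and follows essentially the same approach as the paper: expand the squared $L^2(\Omega;H)$-norm, use independence and centering to drop the cross terms, and then expand $V[v]$ to obtain the bound. The only difference is cosmetic---you make the vanishing of the off-diagonal terms $\E[(X_i,X_j)_H]=0$ explicit via an orthonormal basis argument, whereas the paper passes directly from $\E\bigl[\|\sum_i X_i\|_H^2\bigr]$ to $\sum_i \E[\|X_i\|_H^2]$ in one line.
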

\begin{proof}
As $v_i(\omega)$ are independent and identically distributed,
we have
\[
\begin{array}{rcl}
\|\E[v] - \E_M[v]\|^2_{L^2(\Omega; H)} 
&=& \displaystyle \E\left[ \left\|\E[v]-\frac{1}{M}\sum_{i=1}^Mv_i(\omega)\right\|^2_H \right] 
= \frac{1}{M^2}\sum_{i=1}^M\E\left[\|\E[v] - v_i(\omega)\|^2_H\right] \\[3mm]
&=& \displaystyle\frac{1}{M} \E \left[\|\E[v] - v\|_H^2\right] 
=  \frac{1}{M} V[v]
\end{array}
\]
and $V[v] =\E[\|v\|^2_H] - \|\E[v]\|^2_H  \leq  \|v\|^2_{L^2(\Omega;H)}$.
\end{proof}

Before we present an error estimate  for the inexact multilevel Monte Carlo method we state a basic identity, that can be proved in a similar way as a related result in~\cite{elpde2}. 

\begin{lem} \label{prop:MLMCCOV}
The inexact multilevel Monte Carlo Galerkin approximation $\E^L[\tilde{u}_L]$ satisfies 
\begin{equation} \label{eq:IDENT}
\|\E[u]- \E^L[\tilde{u}_L]\|^2_{L^2(\Omega;H)} 
= \|\E[u-\tilde{u}_L]\|^2_H  
+ \displaystyle \sum_{l=1}^{L} M_l^{-1} V[\tilde{u}_l - \tilde{u}_{l-1}].
\end{equation}
\end{lem}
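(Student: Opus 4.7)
The plan is to split the error into a deterministic bias term plus $L$ Monte Carlo fluctuation terms (one per level), and then show that all pairwise cross products vanish in the $L^2(\Omega;H)$ inner product, so the squared norm splits into a sum of squared norms to which Lemma~\ref{mcestimate} can be applied.

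First I would telescope the exact expectation $\E[\tilde u_L] = \sum_{l=1}^L \E[\tilde u_l - \tilde u_{l-1}]$ (with $\tilde u_0 = 0$) and subtract the defining expression \eqref{eq:MLMC} for $\E^L[\tilde u_L]$ to write
\[
\E[u] - \E^L[\tilde u_L] = \underbrace{\E[u - \tilde u_L]}_{=:\,e_0 \,\in\, H} \;+\; \sum_{l=1}^L \underbrace{\bigl(\E[\tilde u_l - \tilde u_{l-1}] - \E_{M_l}[\tilde u_l - \tilde u_{l-1}]\bigr)}_{=:\,e_l\,\in\, L^2(\Omega;H)}.
\]
Here $e_0$ is a deterministic element of $H$, regarded as a constant random variable, while each $e_l$ for $l\geq 1$ is a centred random variable, i.e.\ $\E[e_l]=0$, since $\E_{M_l}$ is an unbiased estimator of $\E$.

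Next I would compute $\|e_0 + \sum_l e_l\|^2_{L^2(\Omega;H)}$ by expanding the scalar product $(v,w)_{L^2(\Omega;H)} = \E[(v,w)_H]$ and show that every cross term vanishes. For the mixed terms with $e_0$,
\[
(e_0, e_l)_{L^2(\Omega;H)} = \E\bigl[(e_0,e_l)_H\bigr] = \bigl(e_0, \E[e_l]\bigr)_H = 0, \qquad l=1,\dots,L,
\]
using that $e_0$ is deterministic. For the cross terms $(e_l,e_k)_{L^2(\Omega;H)}$ with $1\leq l<k\leq L$, I would invoke the standard MLMC assumption that the samples used in $\E_{M_l}$ and $\E_{M_k}$ are drawn independently across levels; together with $\E[e_l]=\E[e_k]=0$, independence of the $H$-valued random variables $e_l$ and $e_k$ gives $\E[(e_l,e_k)_H] = (\E[e_l],\E[e_k])_H = 0$.

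It then remains to treat the diagonal terms. For $l=0$, the $L^2(\Omega;H)$ norm of the constant random variable $e_0$ is simply $\|e_0\|_H = \|\E[u-\tilde u_L]\|_H$, producing the first summand on the right-hand side of \eqref{eq:IDENT}. For each $l\geq 1$, applying Lemma~\ref{mcestimate} to $v = \tilde u_l - \tilde u_{l-1} \in L^2(\Omega;H)$ yields
\[
\|e_l\|^2_{L^2(\Omega;H)} = M_l^{-1}\, V[\tilde u_l - \tilde u_{l-1}],
\]
which gives the sum in \eqref{eq:IDENT}. The only delicate point in the whole argument is the cross-level independence assumption, which is standard in the MLMC literature and implicit in the description of $\E^L[\tilde u_L]$; everything else is just the Pythagorean identity together with Lemma~\ref{mcestimate}.
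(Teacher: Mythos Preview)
Your proposal is correct and follows essentially the same approach as the paper: decompose the error into the deterministic bias $\E[u-\tilde u_L]$ plus the level-wise Monte Carlo fluctuations, use independence across levels (and the fact that each fluctuation is centred) to eliminate the cross terms, and then apply Lemma~\ref{mcestimate} to each diagonal term. The paper's write-up is slightly more compressed, grouping the bias--fluctuation orthogonality and the cross-level orthogonality into a single chain of equalities, but the argument is the same.
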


\begin{proof}
As Monte Carlo approximations on different levels 
are independent, Lemma~\ref{mcestimate} yields
\[
 \begin{array}{rcl}
\|\E[u]-\E^L[\tilde{u}_L]\|^2_{L^2(\Omega;H)} &=& \|\E[u]-\E[\tilde{u}_L]\|^2_{L^2(\Omega; H)}
+ \|\E[\tilde{u}_L]-\E^L[\tilde{u}_L]\|^2_{L^2(\Omega; H)} \\

&=& \|\E[u-\tilde{u}_L]\|^2_H 
+ \left \|\displaystyle \sum_{l=1}^{L} \E[\tilde{u}_l-\tilde{u}_{l-1}]-\E_{M_l}[\tilde{u}_l-\tilde{u}_{l-1}]\right \|^2_{L^2(\Omega; H)} \\

&=& \|\E[u-\tilde{u}_L]\|^2_H 
+\displaystyle \sum_{l=1}^{L} \|\E[\tilde{u}_l-\tilde{u}_{l-1}]-\E_{M_l}[\tilde{u}_l-\tilde{u}_{l-1}]\|^2_{L^2(\Omega; H)} \\

&=& \|\E[u-\tilde{u}_L]\|^2_H 
+ \displaystyle \sum_{l=1}^{L} M_l^{-1} V[\tilde{u}_l - \tilde{u}_{l-1}].
\end{array}
\]
\end{proof} 

We now prove an error bound for the inexact  multilevel Monte Carlo Galerkin method.
\begin{thm} \label{thm:MLMCERR}
Let the Assumptions~\ref{a:VI} and \ref{a:M1} - \ref{a:M3} hold.
Then the inexact multilevel Monte Carlo Galerkin approximation $\E^L[\tilde{u}_L]$
of the expected value $\E[u]$
satisfies the error estimate
 \begin{equation} \label{eq:MLMCERR}
    \begin{array}{l} 
    \|\E[u]- \E^L[\tilde{u}_L]\|_{L^2(\Omega;H)}^2 \leq\\[3mm]
    \qquad \qquad 
      3 M_1^{-1} ( {\textstyle \frac{1}{4}} Tol_1^2 + V[u] ) +
      {\textstyle \frac{1}{2}} \left( 
      \displaystyle 1 + (1+q^{-1})^2 \sum_{l=2}^{L} M_l ^{-1} q^{2(l-L)} 
     \right)Tol^2.
    \end{array}
 \end{equation}
\end{thm}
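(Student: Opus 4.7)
The plan is to start from the identity of Lemma~\ref{prop:MLMCCOV} and bound the two contributions separately: the bias $\|\E[u-\tilde{u}_L]\|_H^2$ and the variance sum $\sum_{l=1}^L M_l^{-1} V[\tilde u_l - \tilde u_{l-1}]$.

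For the bias term I would invoke Jensen's inequality (or equivalently Cauchy--Schwarz with respect to $\P$) to pass from $H$ to $L^2(\Omega;H)$, then split $u - \tilde u_L = (u - u_L) + (u_L - \tilde u_L)$ and apply the triangle inequality in $L^2(\Omega;H)$. Assumption~\ref{a:M2} controls the first summand and Assumption~\ref{a:M3}, once integrated over $\Omega$ using the measurability of $\tilde u_L$, controls the second. This yields $\|\E[u-\tilde u_L]\|_H \leq \tfrac{1}{\sqrt 2} Tol$, so $\|\E[u-\tilde u_L]\|_H^2 \leq \tfrac{1}{2}Tol^2$, which accounts for the stand-alone $\tfrac{1}{2}Tol^2$ term in \eqref{eq:MLMCERR}.

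For the variance terms I would use two facts: (i) $V[\cdot]^{1/2}$ is a seminorm on $L^2(\Omega;H)$, so it obeys the triangle inequality, and (ii) $V[v] \leq \|v\|_{L^2(\Omega;H)}^2$ by Lemma~\ref{mcestimate}. For $l \geq 2$ I insert $u$ and write $\tilde u_l - \tilde u_{l-1} = (\tilde u_l - u) + (u - \tilde u_{l-1})$. Applying the seminorm triangle inequality together with (ii) and Assumptions~\ref{a:M2}--\ref{a:M3} gives
\[
V[\tilde u_l - \tilde u_{l-1}]^{1/2} \leq \tfrac{1}{\sqrt 2}(Tol_l + Tol_{l-1}) = \tfrac{1}{\sqrt 2}(1+q^{-1})Tol_l,
\]
and since $Tol_l = q^{l-L}Tol$ this produces $V[\tilde u_l - \tilde u_{l-1}] \leq \tfrac{1}{2}(1+q^{-1})^2 q^{2(l-L)}Tol^2$, giving exactly the geometric sum appearing in \eqref{eq:MLMCERR} after dividing by $M_l$ and summing.

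The $l=1$ term is the slightly delicate one, because $\tilde u_0 = 0$ forces us to keep $V[u]$ rather than convert everything into a multiple of $Tol_1^2$. Here I would decompose $\tilde u_1 = u + (u_1 - u) + (\tilde u_1 - u_1)$, apply the triangle inequality for $V^{1/2}$ and then the elementary inequality $(a+b+c)^2 \leq 3(a^2+b^2+c^2)$ to obtain
\[
V[\tilde u_1] \leq 3\bigl(V[u] + V[u_1-u] + V[\tilde u_1 - u_1]\bigr) \leq 3\bigl(V[u] + \tfrac{1}{8}Tol_1^2 + \tfrac{1}{8}Tol_1^2\bigr) = 3V[u] + \tfrac{3}{4}Tol_1^2,
\]
which, divided by $M_1$, is precisely the first term on the right-hand side of \eqref{eq:MLMCERR}. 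The main technical point throughout is the careful bookkeeping of the constants $\tfrac{1}{2\sqrt 2}$ in Assumptions~\ref{a:M2}--\ref{a:M3} so that the triangle-inequality losses (factor $2$ from squaring, factor $3$ from the three-term split at level $1$) combine exactly to the constants $\tfrac{1}{2}$, $3$, $\tfrac{1}{4}$ stated in the theorem; apart from this, the argument is a routine combination of Lemma~\ref{mcestimate}, Lemma~\ref{prop:MLMCCOV} and the two discretization/solver accuracy assumptions.
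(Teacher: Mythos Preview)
Your proposal is correct and follows essentially the same route as the paper: start from the identity in Lemma~\ref{prop:MLMCCOV}, bound the bias by Jensen plus the triangle inequality with Assumptions~\ref{a:M2}--\ref{a:M3}, bound $V[\tilde u_l-\tilde u_{l-1}]$ for $l\ge 2$ by inserting $u$ and using $V[v]\le\|v\|_{L^2(\Omega;H)}^2$, and treat $l=1$ by the three-term split $\tilde u_1=u+(u_1-u)+(\tilde u_1-u_1)$ together with $(a+b+c)^2\le 3(a^2+b^2+c^2)$. The only cosmetic difference is that the paper writes the $l\ge 2$ step as a single four-term triangle inequality in $L^2(\Omega;H)$ rather than invoking the seminorm property of $V^{1/2}$; the bounds and constants coincide.
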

\begin{proof}
We estimate  the terms on the right hand side of the identity \eqref{eq:IDENT}.
First we get
\[
\E[\|u-\tilde{u}_L\|_H] \leq \|u-\tilde{u}_L\|_H 
\leq \|u-u_L \|_H + \|u_L-\tilde{u}_L\|_H \leq 2^{-1/2} Tol_L
\]
utilizing the triangle inequality together with Assumptions~\ref{a:M2} and \ref{a:M3}.
Then, for $l=2,\dots,L$ we have
\[
\begin{array}{rcl}
V[\tilde{u}_l - \tilde{u}_{l-1}]
& \leq & \|\tilde{u}_l - \tilde{u}_{l-1}\|_{L^2(\Omega;H)}^2 \\
&\leq &(
\|\tilde{u}_l - u_l\|_{L^2(\Omega;H)} + \|u_l - u\|_{L^2(\Omega;H)}\\
&& \qquad \qquad
+ \|u - u_{l-1}\|_{L^2(\Omega;H)} + \|u_{l-1} - \tilde{u}_{l-1}\|_{L^2(\Omega;H)}
)^2\\
&\leq& {\textstyle \frac{1}{2}} (1+q^{-1})^2 Tol_l^2,
\end{array}
\]
again by  Assumptions~\ref{a:M2}, \ref{a:M3}, and \eqref{eq:MLTOL}.
Finally, for $l=1$, we obtain the  estimate
\[
\begin{array}{rcl}
V[\tilde{u}_1] 
& = & V[(\tilde{u}_1-u_1) + (u_1 - u) + u]\\
& \leq & 3 ( \|\tilde{u}_1-u_1\|^2_{L^2(\Omega;H)} + \|u_1 - u\|^2_{L^2(\Omega;H)} + V[u])\\
& \leq & 3({\textstyle \frac{1}{4}} Tol_1^2 +  V[u]).
\end{array}
\]
Inserting the above  estimates into \eqref{eq:IDENT}, we obtain 
\[
    \begin{array}{l}
    \|\E[u]- \E^L[\tilde{u}_L]\|_{L^2(\Omega;H)}^2 \leq\\
    \qquad \qquad {\textstyle \frac{1}{2}} Tol^2 \displaystyle
    + 3 M_1^{-1} ( {\textstyle \frac{1}{4}} Tol_1^2 + V[u] ) 
    + {\textstyle \frac{1}{2}}(1+q^{-1})^2 \sum_{l=2}^{L} M_l^{-1} Tol_l^2.
    \end{array}
\]
As a consequence of \eqref{eq:MLTOL}, 
we have $Tol_l=q^{l-L}Tol$ and the assertion follows.
\end{proof} 

The error estimate \eqref{eq:MLMCERR} clearly implies 
that the desired accuracy $Tol$ 
is obtained for sufficiently large numbers of samples $M_l$, $l=1,\dots,L$.

\MD{We now investigate the computational cost for the evaluation of $\E^L[\tilde{u}_L]$.
Assuming that the  evaluation of the inexact solution
of the discrete pathwise problems \eqref{eq:MULTIGALERKIN} 
dominates overall work, the computational cost is defined by
\begin{equation} \label{eq:COSTDEF}
\sum_{l=1}^L \sum_{i=1}^{M_l} cost(\tilde{u}_{l,i}(\omega)),
\end{equation}
where $cost(\tilde{u}_{l,i}(\omega))$ stands for the computational cost of one 
evaluation of $\tilde{u}_{l,i}(\omega)$ measured in the number of floating-point operations.
%At this point, we do not fix how to measure computational cost, 
%e.g., in terms of run time or number of flops.
%Instead, 
We  relate $cost(\tilde{u}_{l,i}(\omega))$ to the dimension $N_{l,i}(\omega)$ of $S_{l,i}(\omega)$.
}

\begin{ass}\label{a:M4}
\MD{For all $l=1,\dots,L$ and each $\omega\in \Omega$,} an approximation  $\tilde{u}_l(\omega)$  of the  solution $u_l(\omega)$ of \eqref{eq:MULTIGALERKIN} 
 can be evaluated at computational cost bounded by 
\[
c_0(1+ \log (N_l(\omega)))^\mu N_l(\omega)  
\]
 with positive constants $c_0$, $\mu$ independent of $Tol_l$, 
 $N_l(\omega)$, and $\omega \in \Omega$.
\end{ass} 

In order to obtain a bound for the computational cost in terms  of the desired accuracy,  
$Tol_l$ has to be related to $N_l(\omega)$.
\begin{ass} \label{a:M5}
\MD{For all $l=1,\dots,L$ and each $\omega\in \Omega$,} the dimension $ N_l(\omega)$ of the ansatz space $S_l(\omega)$ providing the accuracy \eqref{eq:PMLACC} satisfies 
 \begin{equation}
  N_l(\omega) \leq c_1 Tol_l^{-s},
 \end{equation}
with positive constants $c_1$, $s$ 
independent of  $Tol_l$, $N_l(\omega)$, and $\omega \in \Omega$.
 \end{ass}
 
Now we are ready to state an upper bound for the  computational cost 
for the evaluation of $\E^L[u_L]$  in terms  of the desired accuracy $Tol$.
The proof is carried out along the lines of similar results in~\cite{elpde2,giles2008}.

\begin{thm} \label{thm:MLMCEFFORT}
 Let the Assumptions \ref{a:VI} and \ref{a:M1} - \ref{a:M5} hold. 
 Then there are numbers of samples $M_l$, $l=1,\dots,L$, such that 
 the inexact pathwise multilevel Monte Carlo Galerkin approximation $\E^L[\tilde{u}_L]$
 satisfies the error estimate
\begin{equation} \label{eq:MLMCTOL}
 \|\E[u] - \E^L[\tilde{u}_L]\|_{L^2(\Omega;H)} \leq Tol
\end{equation}
and can be evaluated with computational cost  bounded by
\begin{equation} \label{eq:LMCCOST}
C (1+ |\log(Tol_1)|)^\mu Tol_1^{-s} L^{\mu + c_s} Tol^{-\max\{2,s\}}
\quad \text{with} \quad 
\left \{
\begin{array}{rcl}
 c_s = 2 &for& s=2,\\
 c_s=0   &for& s\neq 2,\\
\end{array}
\right .
\end{equation}
and a constant $C$ only depending  on $c_0$, $c_1$, $q$, $s$, $\mu$, and $V[u]$.
\end{thm}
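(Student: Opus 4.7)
My plan is to combine Theorem~\ref{thm:MLMCERR} with the per-sample cost and dimension estimates of Assumptions~\ref{a:M4} and~\ref{a:M5}, along the lines of the MLMC analyses in~\cite{giles2008,elpde2}.

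First I would read off from~\eqref{eq:MLMCERR} the conditions on $(M_l)$ that guarantee $\|\E[u] - \E^L[\tilde u_L]\|_{L^2(\Omega;H)} \le Tol$: namely $M_1 \gtrsim Tol^{-2}(Tol_1^2 + V[u])$ to control the level-$1$ contribution, and $\sum_{l=2}^L M_l^{-1} q^{2(l-L)} \lesssim 1$ for the remainder. A natural fulfilment of the second constraint is the Giles-type balance $M_l \propto Tol_l^{1+s/2}$ for $l\ge 2$, with a normalising constant $\lambda=\lambda(L,Tol,Tol_1,q,s)$ chosen to make the sum bounded. For $s\neq 2$ this $\lambda$ is independent of $L$; in the critical case $s=2$ an extra factor $L$ is forced into $\lambda$, and this factor is ultimately responsible for the $L^{c_s}$ with $c_s=2$ in~\eqref{eq:LMCCOST}.

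Second, I would bound the per-sample cost by inserting $N_l \le c_1 Tol_l^{-s}$ from Assumption~\ref{a:M5} into Assumption~\ref{a:M4}. Using $Tol_l = q^{l-L}Tol$ together with the elementary identity $\log Tol^{-1} = |\log Tol_1| + (L-1)\log q^{-1}$ yields the uniform estimate $(1+\log N_l(\omega))^\mu \le C\, L^\mu(1+|\log Tol_1|)^\mu$, which accounts for the $L^\mu(1+|\log Tol_1|)^\mu$ prefactor in~\eqref{eq:LMCCOST}. Hence the level-$l$ summand in~\eqref{eq:COSTDEF} for $l\ge 2$ is at most $C\, L^\mu(1+|\log Tol_1|)^\mu\,\lambda\,Tol_l^{1-s/2}$, while the level-$1$ contribution is of order $L^\mu(1+|\log Tol_1|)^\mu\,Tol_1^{-s}\,Tol^{-2}$, producing the $Tol_1^{-s}$ factor in~\eqref{eq:LMCCOST}.

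Summing over $l\ge 2$ reduces to the geometric series $\sum_{l=2}^L q^{(l-L)(1-s/2)}$, which is uniformly bounded in $L$ for $s>2$, equals $L-1$ for $s=2$, and grows like $(Tol_1/Tol)^{1-s/2}$ for $s<2$; multiplying by $\lambda$ gives a total contribution from levels $\ge 2$ of order $Tol^{-s}$ for $s>2$, $L\,Tol^{-2}$ for $s=2$, and $Tol^{-2}$ for $s<2$ (modulo the $L^\mu(1+|\log Tol_1|)^\mu$ prefactor). Comparing this with the separately handled level-$1$ contribution and taking the larger in each case produces~\eqref{eq:LMCCOST}. The main obstacle is the careful bookkeeping of the logarithmic and $L$-factors: a crude equidistribution $M_l \sim L\,q^{2(l-L)}$ would introduce a spurious extra $L$ in every case, so the Giles-type balance is needed to match the announced exponent $L^{\mu+c_s}$ sharply; once this choice is made, the three-case analysis is a routine geometric summation and the constants $c_0,c_1,q,s,\mu,V[u]$ are absorbed into $C$ at the end.
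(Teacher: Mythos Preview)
Your approach is essentially that of the paper: both treat $M_1$ separately via a bound of the form~\eqref{eq:M1}, choose $M_l\propto Tol_l^{(s+2)/2}$ for $l\ge 2$ (which is exactly the content of the paper's choices~\eqref{eq:MS1}, \eqref{eq:MS2}, \eqref{eq:MS3} once one rewrites them using $Tol_l=q^{l-1}Tol_1=q^{l-L}Tol$), absorb the logarithm via $(1+\log N_l)^\mu\le C\,L^\mu(1+|\log Tol_1|)^\mu$, and finish with a three-case geometric summation. One bookkeeping slip: in the $s=2$ case your final paragraph reports the level-$\ge 2$ contribution as $L\,Tol^{-2}$, but since $\lambda\sim L\,Tol^{-2}$ and the geometric sum contributes another factor $L-1$, the product is of order $L^2\,Tol^{-2}$, which is what yields the $c_s=2$ you correctly announced in your first paragraph.
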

\begin{proof}
Utilizing Assumptions~\ref{a:M4} and \ref{a:M5},
the computational cost for the evaluation of $\E^L[\tilde{u}_L]$ is bounded by
\[
\begin{array}{l}
\displaystyle 
c_0\sum_{l=1}^L \sum_{i=1}^{M_l}
\left((1+ \log(N_{l,i}(\omega)))^\mu N_{l,i}(\omega)
+ (1+ \log(N_{l-1,i}(\omega)))^\mu N_{l-1,i}(\omega_{li})\right) \\[3mm]
\begin{array}{rcl}
&\leq&    \displaystyle c_0c_1 \sum_{l=1}^L M_l 
\left((1+ \log(c_1Tol_l^{-s}))^\mu Tol_l^{-s} 
+ (1+ \log(c_1Tol_{l-1}^{-s}))^\mu Tol_{l-1}^{-s}\right)\\[3mm]
&\leq& c_0c_1 (1+q^{s})  \displaystyle \sum_{l=1}^L M_l 
\left(1 + \log( c_1 q^{-s(l-1)}Tol_1^{-s} ) \right)^\mu Tol_l^{-s}\\[3mm]
&\leq&   c  L^\mu (1 +  |\log(Tol_1)|)^\mu \displaystyle \sum_{l=1}^L M_l  Tol_l^{-s}
\end{array}
\end{array}
\]
with a constant $c$ depending on $c_0$, $c_1$, $s$, $\mu$, and $q$.
Hence, the desired upper bounds for the computational cost will
follow from corresponding upper bounds for 
\[\sum_{l=1}^L M_l  Tol_l^{-s}.
 \]
We always select $M_1$ to be the smallest integer such that
 \begin{equation} \label{eq:M1}
  M_1 \geq 12 ({\textstyle \frac{1}{4}} Tol_1^2 + V[u])Tol^{-2}, 
\end{equation}
so that the first term in the error estimate \eqref{eq:MLMCERR} is 
bounded by $\frac{1}{4}Tol^2$.
The choice of the other $M_l$, $l=2,\dots,L$, will depend on $s$.

Let us first consider the case $s < 2$. 
We choose the numbers of samples $M_l$
to be the smallest integers such that 
\begin{equation} \label{eq:MS1}
M_l \geq C_1q^{\frac{s+2}{2}(l-1)+2(1-L)}, \quad l=2,\dots,L,
\end{equation}
denoting $C_1=2 (1+q^{-1})^2 (1-q^{\frac{2-s}{2}})^{-1}$.
Inserting any $M_l$, $l=1,\dots,L$, 
with the properties \eqref{eq:M1} and \eqref{eq:MS1} 
into the error estimate \eqref{eq:MLMCERR}, we get
\[
\|\E[u]- \E^L[\tilde{u}_L]\|^2_{L^2(\Omega;H)} 
 \leq {\textstyle \frac{3}{4}}Tol^2
      + {\textstyle \frac{1}{2}}Tol^2
      (1+q^{-1})^2C_1^{-1}\sum_{l=1}^{L-1}q^{\frac{2-s}{2}l} 
   < Tol^2
\]
by exploiting the convergence of geometric series. 
As we have chosen the smallest integers 
with the properties  \eqref{eq:M1} and \eqref{eq:MS1},
we can exploit $2^{2(1-L)}=Tol_1^2 Tol^{-2}$, 
$Tol_l^{-s}= q^{-s(l-1)}Tol_1^{-s}$, $l=2,\dots, L$,
and  similar arguments as above to obtain
\[
\begin{array}{rcl}
\displaystyle \sum_{l=1}^L M_l  Tol_l^{-s} & \leq & 
\displaystyle \left (12 ({\textstyle \frac{1}{4}} Tol_1^2 + V[u])Tol_1^{-s} +
C_1\sum_{l=2}^L q^{\frac{2-s}{2}(l-1)} \right)Tol^{-2}
+ \sum_{l=1}^L Tol_l^{-s} \\[3mm]
& \leq & c Tol_1^{-s} Tol^{-2}
\end{array}
\]
with a positive constant $c$ depending on  $s<2$,  $q$, and $V[u]$.

We now consider other values of $s$. The numbers of samples $M_l$
are chosen to be the smallest integers such that
\begin{equation} \label{eq:MS2}
M_l \geq C_2 L q^{2(l-L)}, \quad l=2,\dots,L,
\end{equation}
with $C_2=2 (1+q^{-1})^2$ for $s = 2$ 
and such that
\begin{equation} \label{eq:MS3}
M_l \geq C_3 q^{\frac{s+2}{2}(l-L)}, \quad l=2,\dots,L,
\end{equation}
with $C_3=2 (1+q^{-1})^2  (1-q^{\frac{s-2}{2}})^{-1}$ for $s > 2$.
The same arguments as above then provide the desired bounds 
for accuracy and computational cost.
\end{proof}

Observe that the logarithmic term in Assumption~\ref{a:M4} 
is reflected by the logarithmic terms  $(1+ |\log(Tol_1)|)^\mu$ and $L^\mu$ 
in the computational cost.

For $L=1$, the approximation $\E[\tilde{u}_L]$
reduces to an inexact version of the classical Monte Carlo method.
Theorem~\ref{thm:MLMCERR} then implies that the error estimate \eqref{eq:MLMCTOL}  holds for 
\[
M \geq {\textstyle \frac{3}{2}} +  6V[u] Tol^{-2}
\]
with $M=M_1$ and $Tol=Tol_1$. The corresponding  computational cost is bounded by 
\[
 C (1+ |\log(Tol)|)^\mu Tol^{-(2+s)}
\]
with $C$ depending on  $c_0$, $c_1$, $s$, $\mu$, $q$, and $V[u]$,
which indicates that, up to initial tolerance and logarithmic terms,
the multilevel Monte Carlo method is by a factor of $Tol^{-\min\{2,s\}}$ faster
than the classical single level version.

\section{Multilevel Monte Carlo Finite Element methods} \label{sec:MLMCFEM}

We consider problem \eqref{path} with the symmetric bilinear form
\begin{equation} \label{eq:COEF}
a(\omega; v,w)=\int_D \alpha(x,\omega)\nabla v(x) \cdot \nabla w(x) \; dx
\end{equation}
and the  linear functional
\begin{equation} \label{eq:RHS}
\ell(\omega;v)=\int_D f(x,\omega) v(x)\; dx,
\end{equation}
both defined on the Sobolev space $H=H_0^1(D)$ of weakly differentiable functions 
on a bounded Lipschitz domain $D\subset \R^d$, $d=1,2,3$, equipped with the norm
\[
\|v\|_H=
\left(\sum_{i=1}^d \textstyle
\|\frac{\partial}{\partial x_i} v \|_{L^2(D)}^2\right)^{1/2} .
\]
The closed convex set $K\in H$ of admissible solutions is given by
\begin{equation} \label{eq:SPACEK}
 \qquad K=\{v \in H\;|\; v(x) \geq 0 \text{ a.e. in } D \}.
\end{equation}

We impose the following assumptions on the random coefficient $\alpha$ and on
the random right hand side~$f$.
\begin{ass}\label{a:coeff} 
The random diffusion coefficient $\alpha$ and the right hand side $f$ 
are strongly measurable mappings $\Omega \ni \omega \mapsto \alpha(\cdot, \omega) \in L^\infty(D)$ and $\Omega \ni \omega \mapsto f(\cdot, \omega) \in L^2(D)$
with the properties
\begin{equation} \label{eq:UNICOER}
0< \alpha_- \leq \alpha(x, \omega) \leq \alpha_+ < \infty 
\quad \text{a.e. in }  D \times \Omega,
\end{equation}
and $f \in L^2(\Omega; L^2(D))$.
\end{ass}

These assumptions imply Assumption~\ref{a:VI}
and thus existence and uniqueness
of pathwise solutions $u(\omega)$ of \eqref{path}
and  $u\in L^2(\Omega; H)$. 
Note that uniform coercivity \eqref{eq:UNICOER} 
can be replaced by weaker conditions (cf., e.g., \cite{ stochvi}).

On the background of the general results from Section~\ref{sec:MMCGM}
we now concentrate on MLMC finite element methods,
for the numerical approximation of the expectation $\E[u]$.
Single level versions are obtained for the special case~$L=1$.

\subsection{Uniform refinement}
We assume for simplicity that $D$ has a polygonal (polyhedral) boundary
and consider the hierarchy of shape regular, conforming, 
quasiuniform partitions $\cT^{(k)}$, $k \in \N$, of $D$ 
into simplices as obtained by successive uniform refinement
of a given, intentionally coarse, initial partition $\cT^{(1)}$
(we will also assume that $\cT^{(1)}$ is sufficiently fine 
in a sense to be specified later).

Then 
\[
h_k=\max_{t \in \cT^{(k)}} \operatorname{diam} (t) = 2^{-k}h_1 ,\qquad k \in \N,
\]
and the associated finite element spaces
\begin{equation} \label{eq:UNIREF}
S^{(k)}=\{ v \in H \; | \; v|_t \text{ is affine } 
\forall t \in \mathcal{T}^{(k)} \},\qquad k \in \N,
\end{equation}
form a hierarchy of subspaces of $H$.
We consider the pathwise approximations
$u^{(k)}(\omega) \in K^{(k)} = S^{(k)}\cap K$ characterized by
\begin{equation} \label{eq:AUXPROB}
 a(\omega;u^{(k)}(\omega),v-u^{(k)}(\omega))\geq \ell(\omega;v-u^{(k)})\quad \forall v \in K^{(k)}, \qquad \omega \in \Omega.
\end{equation}

\begin{ass}\label{a:lipsch}  
The spatial domain $D$ is convex and the random coefficient
$\alpha$ is a measurable map 
$\Omega \ni \omega \mapsto \alpha(\cdot, \omega) \in C^1(\bar D)$ 
with the property $\alpha\in L^\infty(\Omega;C^1(\bar D))$.
\end{ass}

The following discretization error estimate
is a direct consequence of  \cite[Proposition 4.2]{stochvi}.

\begin{thm}\label{pr:MLMCFEER} 
Let the Assumptions~\ref{a:coeff} and \ref{a:lipsch} hold.
Then the  error estimate 
\begin{equation} \label{eq:PMLFEACC}
\MD{\|u-u^{(k)}\|_{L^2(\Omega;H)} \leq C_0 h_k}
\end{equation}
\MD{holds %for all $\omega \in \Omega$
with a positive constant $C_0$ that 
%depends on $\alpha_-$, $\alpha_+$, $f$, and the shape regularity of $\cT^{(1)}$, but 
is independent of $h_k$, $k \in \mathbb{N}$.}\\
\end{thm}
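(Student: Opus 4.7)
The plan is to reduce the statement to the deterministic finite element error theory for obstacle problems applied pathwise, and then to integrate the resulting bound over $\Omega$. Fix $\omega \in \Omega$. By Assumption~\ref{a:coeff} and Assumption~\ref{a:lipsch}, the pathwise data satisfy $\alpha(\cdot,\omega) \in C^1(\bar D)$ with $\alpha_- \leq \alpha(\cdot,\omega) \leq \alpha_+$, and $f(\cdot,\omega) \in L^2(D)$. Since $D$ is a convex Lipschitz domain and the constraint set is the one-sided obstacle $\{v \in H : v \geq 0\}$, the classical regularity theory of Brezis, Kinderlehrer and Stampacchia gives $u(\omega) \in H^2(D) \cap H_0^1(D)$ with a pathwise bound
\[
\|u(\omega)\|_{H^2(D)} \leq C_{\mathrm{reg}}(\omega) \|f(\cdot,\omega)\|_{L^2(D)},
\]
where $C_{\mathrm{reg}}(\omega)$ depends only on $\alpha_-$ and $\|\alpha(\cdot,\omega)\|_{C^1(\bar D)}$.

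Next I apply the standard Falk-type $H^1$ error estimate for piecewise linear conforming finite element approximations of elliptic obstacle problems on shape-regular quasiuniform meshes. Under the pathwise $H^2$ regularity established above, this gives
\[
\|u(\omega) - u^{(k)}(\omega)\|_H \leq C_{\mathrm{FE}}(\omega)\, h_k \|u(\omega)\|_{H^2(D)},
\]
with $C_{\mathrm{FE}}(\omega)$ depending only on $\alpha_-$, $\alpha_+$, and the shape regularity constant of $\mathcal{T}^{(k)}$, which is fixed along the hierarchy since the refinement is uniform. Combining the two inequalities yields the pathwise bound
\[
\|u(\omega) - u^{(k)}(\omega)\|_H \leq C_{\mathrm{reg}}(\omega) C_{\mathrm{FE}}(\omega)\, h_k \|f(\cdot,\omega)\|_{L^2(D)}.
\]

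To conclude, I observe that Assumption~\ref{a:lipsch} gives $\alpha \in L^\infty(\Omega; C^1(\bar D))$, so $C_{\mathrm{reg}}(\omega) C_{\mathrm{FE}}(\omega)$ is essentially bounded in $\omega$ by some constant $\tilde C$. Squaring, integrating, and using $f \in L^2(\Omega; L^2(D))$ from Assumption~\ref{a:coeff} gives
\[
\|u - u^{(k)}\|_{L^2(\Omega;H)}^2 = \int_\Omega \|u(\omega) - u^{(k)}(\omega)\|_H^2 \, d\P(\omega) \leq \tilde C^2 h_k^2 \|f\|_{L^2(\Omega;L^2(D))}^2,
\]
which is the claim with $C_0 = \tilde C \|f\|_{L^2(\Omega;L^2(D))}$.

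The main obstacle is the uniform-in-$\omega$ control of the pathwise constants. The deterministic obstacle-problem literature offers various sharp forms of the $O(h)$ estimate, and the constants depend on $\alpha_-$, on the $C^1$ norm of the coefficient (entering via the $H^2$ regularity of $u(\omega)$), and on the continuity constant $\alpha_+$ (entering via the energy-norm equivalence used in the finite element bound). All of these are pathwise quantities, and the crucial point is that Assumptions~\ref{a:coeff} and \ref{a:lipsch} together guarantee that each is $\P$-essentially bounded, allowing the pathwise estimate to be integrated into an $L^2(\Omega;H)$ bound. This essentially reproduces the argument of Proposition~4.2 in~\cite{stochvi}, to which the statement is attributed.
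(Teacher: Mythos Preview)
Your proposal is correct and matches the paper's approach: the paper does not supply its own argument but simply refers to \cite[Proposition~4.2]{stochvi}, and your sketch is precisely the content of that result---pathwise $H^2$ regularity from the classical obstacle-problem theory, a pathwise Falk-type $O(h)$ finite element estimate, and then integration over $\Omega$ using the $\omega$-uniform bounds on the constants guaranteed by Assumptions~\ref{a:coeff} and~\ref{a:lipsch}. You have correctly identified the key point, namely that the pathwise constants are controlled uniformly in $\omega$ via $\alpha_-$, $\alpha_+$, and $\|\alpha\|_{L^\infty(\Omega;C^1(\bar D))}$.
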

We make sure that $\cT^{(1)}$ is fine enough to guarantee
\begin{equation} \label{eq:INITUNI}
\MD{\|u-u^{(1)}\|_{L^2(\Omega;H)} \leq {\textstyle \frac{1}{2\sqrt{2}}} Tol_1 }
\end{equation}
by selecting $h_1$ such that  $C_0 h_1 \leq {\textstyle \frac{1}{2\sqrt{2}}}Tol_1$ %\textstyle\frac{1}{4} Tol_1$
and define a uniform MLMC hierarchy 
in the sense of \eqref{eq:NESTSEQ} according to
\begin{equation} \label{eq:UNISPAC}
S_l(\omega)=S^{(r(l-1)+1)}, \quad K_l=S_l(\omega)\cap K,\quad  l=1,\dots,L,\qquad \omega \in \Omega.
\end{equation}
Then Assumption~\ref{a:M1} is trivially satisfied 
and Theorem~\ref{pr:MLMCFEER} implies the accuracy Assumption~\ref{a:M2}
by choosing  $r \in \N$ such that $2^{-r}\leq q$.
Furthermore, Assumption \ref{a:M3} can be satisfied by sufficiently many steps of
any iterative solver for elliptic variational inequalities that 
converges uniformly in $\omega$
and consists of basic arithmetic or $\max$ operations, 
thus preserving measurability
(cf., e.g., \cite{CottlePang92,GraeserKornhuber09,Kornhuber1994,mandel:1984,MUlbrich2011}).
Then, by Theorem~\ref{thm:MLMCERR}, the resulting uniform, inexact 
MLMC finite element approximation $\E^L[\tilde{u}_L]$ 
with \MD{sufficiently large numbers of MC samples} $M_l$ on each level
satisfies the desired error estimate
\begin{equation} \label{eq:MMCTOL}
\|\E[u] - \E^L[\tilde{u}_L] \|_{L^2(\Omega; H)} \leq Tol.
\end{equation}

It is well-known (cf. \cite[Section 4.5]{stochvi}, \cite[Corollary 4.1]{Badea}) that 
Standard Monotone Multigrid (STDMMG) methods~\cite{Kornhuber1994,mandel:1984}
satisfy Assumption \ref{a:M4} with  $\mu = 4$ in  $d=1$ space dimension,
with $\mu = 5$ in  $d=2$ space dimensions, \MD{and a suitable constant $c_0$}. 
In spite of computational evidence, no theoretical justification 
of mesh-independent convergence rates seem to be available for $d=3$.
Finally, utilizing again Theorem~\ref{pr:MLMCFEER}, we find that
Assumption \ref{a:M5} holds with $s = d$, 
because the dimension $N_l$ of $S_l$ is bounded  by $h_{r(l-1) + 1}^{-d}$ 
and thus by $Tol_l^{-d}$ up to a constant \MD{$c_1$}. 
Hence, Theorem~\ref{thm:MLMCEFFORT} implies the following 
result on the efficiency of uniform MLMC finite element methods.

\begin{thm} \label{eq:MLMCFEEFFORT}
Let the Assumptions~\ref{a:coeff},~\ref{a:lipsch},~\eqref{eq:INITUNI} hold,
and let STDMMG be used for the iterative solution of the pathwise 
discretized obstacle problems of the form \eqref{eq:MULTIGALERKIN}.

\MD{Then there are $M_l$, $l=1,\dots, L$, such that
the resulting  uniform MLMC finite element method
provides an approximation $\E^L[\tilde{u}_L]$
with prescribed accuracy \eqref{eq:MMCTOL} at computational cost bounded by 
\[
C(1 + d |\log Tol_1|)^\mu Tol_1^{-d} L^{\mu + c_s} Tol^{-\max\{2,d\}}
\quad \text{with} \quad 
\left\{ 
\begin{array}{rcl}
 c_s=0,\; \mu = 4 &\text{for}& d=1,\\
 c_s=2,\; \mu = 5 &\text{for}& d=2,
\end{array}
\right .
\]
and a constant $C$ depending only on $c_0$, $c_1$, $q$, and $V[u]$.}
\end{thm}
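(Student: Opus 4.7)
The plan is to obtain the theorem as a direct specialization of Theorem~\ref{thm:MLMCEFFORT} with $s=d$: I would verify each of the abstract Assumptions~\ref{a:VI} and \ref{a:M1}--\ref{a:M5} in the uniform finite element setting with STDMMG as inner solver, and then read off the cost bound. Most of the ingredients have already been gathered in the discussion preceding the theorem, so the task is mainly to organize them and to track the $d$-dependence of the logarithmic factor.

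On the consistency side, Assumption~\ref{a:VI} is a consequence of Assumption~\ref{a:coeff}. For the uniform hierarchy~\eqref{eq:UNISPAC}, each $K_l(\omega)=S^{(r(l-1)+1)}\cap K$ is deterministic and non-empty (it contains~$0$), so Assumption~\ref{a:M1} is trivial. Assumption~\ref{a:M2} follows from Theorem~\ref{pr:MLMCFEER} applied on level $l$ with mesh size $h_{r(l-1)+1}=2^{-r(l-1)}h_1$: choosing $r\in\N$ with $2^{-r}\leq q$ and combining with~\eqref{eq:INITUNI} yields $\|u-u_l\|_{L^2(\Omega;H)}\leq q^{l-1}\tfrac{1}{2\sqrt{2}}Tol_1=\tfrac{1}{2\sqrt{2}}Tol_l$. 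Assumption~\ref{a:M3} is then secured by running sufficiently many STDMMG iterations per sample; uniform contraction in $\omega$ together with the fact that the algorithm consists of arithmetic and $\max$ operations delivers both the required accuracy and the measurability of $\tilde{u}_l$. On the cost side, Assumption~\ref{a:M4} holds for STDMMG with $\mu=4$ for $d=1$ and $\mu=5$ for $d=2$ by the cited multigrid results, and uniformity of the prefactor $c_0$ in $\omega$ follows from~\eqref{eq:UNICOER}. Assumption~\ref{a:M5} with $s=d$ follows from quasiuniformity, since $N_l\lesssim h_{r(l-1)+1}^{-d}$ and $h_{r(l-1)+1}\lesssim Tol_l$ by construction.

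With $s=d$, Theorem~\ref{thm:MLMCEFFORT} immediately produces sample numbers $M_l$ such that the accuracy~\eqref{eq:MMCTOL} holds and the cost is bounded by $C(1+|\log Tol_1|)^\mu Tol_1^{-d} L^{\mu+c_s}Tol^{-\max\{2,d\}}$ with $c_s=0$ for $d=1$ and $c_s=2$ for $d=2$, matching the stated exponents. The only bookkeeping that deserves attention is the replacement of $(1+|\log Tol_1|)^\mu$ by $(1+d|\log Tol_1|)^\mu$ in the statement: this comes from tracking $\log N_l(\omega)\lesssim \log(c_1 Tol_l^{-d})\lesssim d\,|\log Tol_l|$ inside the proof of Theorem~\ref{thm:MLMCEFFORT} and absorbing the $q$-dependent prefactors into $C$. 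The only genuine obstacle, and the reason the statement is restricted to $d\in\{1,2\}$, is Assumption~\ref{a:M4} itself: in three space dimensions no mesh-independent convergence rate for STDMMG on obstacle problems has been rigorously established, so no admissible $\mu$ can currently be certified.
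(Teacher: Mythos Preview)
Your proposal is correct and follows essentially the same route as the paper: the theorem is obtained as a direct specialization of Theorem~\ref{thm:MLMCEFFORT} with $s=d$, after verifying Assumptions~\ref{a:VI} and \ref{a:M1}--\ref{a:M5} from the uniform finite element setup and the STDMMG convergence results exactly as in the discussion preceding the statement. The paper does not even give a separate proof environment for this theorem, treating it as an immediate consequence of that discussion; your additional remark on tracking the factor $d$ in the logarithm and the explicit reason for excluding $d=3$ are helpful clarifications but not departures.
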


\MD{In fact, one could chose  
% $M_l$, $l=1,\dots,L$, to be the smallest numbers such that 
% \eqref{eq:M1} holds for $l=1$
% and \eqref{eq:MS1}  and \eqref{eq:MS2} holds for  $l=2,\dots,L$
$M_1$ according to \eqref{eq:M1} and $M_l$, $l=2,\dots,L$,
according to \eqref{eq:MS1}  and \eqref{eq:MS2} for  $d=1$ and $d=2$, respectively.}

\MD{The number of refinements in \eqref{eq:UNISPAC} can be defined a priori
for all $\omega \in \Omega$.
Hence, Theorem~\ref{eq:MLMCFEEFFORT} is not new, 
but just a slightly enhanced version, e.g., of Theorem~4.10 from \cite{stochvi}.
Assuming that for all $k\in \N$ and each $\omega \in \Omega$
there is an a posteriori error estimate
$\eta^{(k)}(\omega)$ satisfying
\[
\|u(\omega) - u^{(k)}(\omega)\|_H\leq \eta^{(k)}(\omega),
\]
a priori uniform refinement could be replaced by a posteriori uniform refinement
with possibly different mesh sizes for different $\omega \in \Omega$.
This approach can be regarded as a special case of 
a posteriori adaptive refinement presented in the next subsection.
}

\subsection{Adaptive refinement}
We consider a sequence of nested finite element spaces $S^{(k)}(\omega)$
associated with a corresponding sequence of partitions $\cT^{(k)}(\omega)$, $k\in \cN$,
which, for each fixed $\omega \in \Omega$, 
is obtained by successive adaptive refinement of 
the given fixed initial triangulation $\cT^{(1)}(\omega)=\cT^{(1)}$. 
Let $\cT^{(1)}$ be fine enough to provide the accuracy \eqref{eq:INITUNI}
%for all $\omega \in \Omega$ 
and we set 
\begin{equation} \label{eq:ADAINIT}
 S_1(\omega)=S^{(1)},\qquad \omega \in \Omega.
\end{equation}

\MD{For each fixed $\omega \in \Omega$ we apply a pathwise adaptive refinement
providing a hierarchy of subspaces $S^{(k)}(\omega)$ 
and corresponding approximations $u^{(k)}(\omega)$.
We  assume  convergence of the pathwise adaptive scheme controlled by 
an a posteriori error estimator.
\begin{ass} \label{ass:MEASADAPT}
For all $k \in \N$ 
and for each fixed $\omega \in \Omega$ we have
\begin{equation} \label{eq:ADPTCONV}
 \|u(\omega) - u^{(k)}(\omega)\|_{H}\leq C_{est} \eta^{(k)}(\omega)\quad \text{and} \quad 
 \eta^{(k)}(\omega)\xrightarrow{k\to \infty} 0
\end{equation}
with an a posteriori error estimator $\eta^{(k)}(\omega)$ and positive constant $C_{est}$ independent of $\omega$.
\end{ass}
}
For each fixed $\omega \in \Omega$, 
there are existing adaptive algorithms 
based on local error indicators 
and corresponding a posteriori error estimates $\eta^{(k)}(\omega)$
that provide convergence \eqref{eq:ADPTCONV}, 
see, e.g., Siebert and Veeser~\cite{SiebertVeeser07},
Braess et al.~\cite[Section 5]{BraessCarstensenHoppe07} or Carstensen~\cite{Carstensen14}. 
\MD{The constant $C_{est}$ in these algorithms usually depends on the initial triangulation $\cT^{(1)}$ and on the ellipticity constants $\alpha_{-}$, $\alpha_{+}$.}

We now define the hierarchy of subspaces \MD{for each $\omega \in \Omega$} according to
\begin{equation} \label{eq:ADAPTSPAC}
S_l(\omega) = S^{(k_l(\omega))}(\omega), \quad l=2,\dots,L,
\end{equation}
where $k_l(\omega)$ is the smallest natural number such that
\begin{equation} \label{eq:MLCRIT}
 \|u(\omega) - u^{(k_l(\omega))}(\omega)\|_H \leq {\textstyle \frac{1}{2\sqrt{2}}}Tol_l%\textstyle \frac{1}{4} Tol_l .
\end{equation}
and $Tol_l$ is chosen according to \eqref{eq:MLTOL}.
This definition makes sense, 
because $k_l(\omega)< \infty$ holds pointwise for each fixed $\omega \in \Omega$
by Assumption~\ref{ass:MEASADAPT}.
Note that 
$k_l(\omega)$ might not be uniformly bounded in $\omega \in \Omega$.
\MD{We assume that adaptive refinement and the accuracy criterion \eqref{eq:MLCRIT} preserve measurability.
\begin{ass}\label{ass:ADAPTMEAS}
For all $l=1,\dots, L$
the set-valued map $\Omega \ni \omega \mapsto S_l(\omega) \in H$ is measurable.
\end{ass}
A rigorous investigation of sufficient conditions for measurability of 
$\omega \to S^{(k)}(\omega)$ and $\omega \to S_l(\omega)$ 
would exceed the scope of this presentation and is therefore postponed to a separate publication.}

\MD{
Assumption~\ref{ass:ADAPTMEAS} clearly implies Assumption~\ref{a:M1} while
the initial condition \eqref{eq:INITUNI} and the accuracy criterion \eqref{eq:MLCRIT}
provide Assumption~\ref{a:M2}.}

Assumption \ref{a:M3} can be satisfied by sufficiently many steps of
any iterative solver for elliptic variational inequalities that 
converges uniformly in $\omega$
and consists of basic arithmetic or $\max$ operations, thus preserving measurability
(cf., e.g., \cite{CottlePang92,GraeserKornhuber09,Kornhuber1994,mandel:1984,MUlbrich2011}).

Like in the uniform case, Assumption \ref{a:M4} can be satisfied by STDMMG methods~\cite{Kornhuber1994,mandel:1984} with  $\mu = 4$ in  $d=1$ space dimension and $\mu = 5$ in  $d=2$ space dimensions with a suitable constant $c_0$.

\MD{Now, instead of the regularity Assumption~\ref{a:lipsch}, we require that  
pathwise adaptive refinement provides quasioptimal meshes uniformly in $\omega\in \Omega$.
\begin{ass}\label{ass:QUASIOPTREF}
For all $l=1,\dots,L$ and each $\omega \in \Omega$, the dimension $N_l(\omega)$
of the finite element space $S_l(\omega)$ defined in \eqref{eq:ADAINIT} and \eqref{eq:ADAPTSPAC}
satisfies
\begin{equation} \label{eq:OPTREF}
N_l(\omega) \leq c_1 Tol_l^{-d}
\end{equation}
with a positive constant $c_1$ independent of 
$Tol_l$, $N_l(\omega)$, and $\omega \in \Omega$.
\end{ass}

For fixed $\omega \in \Omega$ and $K=H$,
the  quasioptimality condition \eqref{eq:OPTREF}
has been established for a variety of adaptive refinement strategies
with a constant $c_1(\omega)$
(cf. e.g., \cite{Binev2004,Stevenson2007,Kreuzer2011}).
Uniform upper bounds for $c_1(\omega)$ as required in Assumption~\ref{ass:QUASIOPTREF}
are observed in the numerical experiments to be presented in the next section. 
Theoretical validation will be the subject of future research.

Now the following convergence result is a direct consequence of Theorem~\ref{thm:MLMCEFFORT}.

\begin{thm} \label{thm_ADAPT}
Let the Assumptions~\ref{a:coeff}, \ref{ass:MEASADAPT} - \ref{ass:QUASIOPTREF}, and~\eqref{eq:INITUNI} hold.
Then there are $M_l$, $l=1,\dots, L$, such that
the adaptive MLMC finite element method 
based on the multilevel hierarchy defined in \eqref{eq:ADAPTSPAC}
provides an approximation $\E[\tilde{u}_L]$ with prescribed accuracy \eqref{eq:MMCTOL}
at computational cost bounded by 
\[
C(1 + d |\log Tol_1|)^\mu Tol_1^{-d} L^{\mu + c_s} Tol^{-\max\{2,d\}}
\quad \text{with} \quad 
\left\{ 
\begin{array}{rcl}
 c_s=0,\; \mu = 4 &\text{for}& d=1,\\
 c_s=2,\; \mu = 5 &\text{for}& d=2,
\end{array}
\right .
\]
and a constant $C$ depending only on $c_0$, $c_1$, $q$ and $V[u]$.
\end{thm}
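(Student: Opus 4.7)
The plan is to reduce the claim to a direct invocation of Theorem~\ref{thm:MLMCEFFORT}, by checking that all of its hypotheses (Assumptions~\ref{a:VI} and \ref{a:M1}--\ref{a:M5}) are guaranteed by the hypotheses of the present theorem. Since the author has already laid out these verifications in the preceding exposition, the proof is essentially a bookkeeping argument that collects them, identifies the relevant exponents $s$ and $\mu$, and then reads off the cost bound.

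First I would verify the structural assumptions. Assumption~\ref{a:coeff} together with \eqref{eq:COEF}--\eqref{eq:RHS} implies Assumption~\ref{a:VI} via the uniform bounds \eqref{eq:UNICOER} on $\alpha$ and $f\in L^2(\Omega;L^2(D))$, so pathwise solutions exist and $u\in L^2(\Omega;H)$. Next, Assumption~\ref{ass:ADAPTMEAS} gives the measurability of $\omega \mapsto S_l(\omega)$, and since $S_l(\omega)\cap K$ is non-empty (it contains $0$), closed and convex, Assumption~\ref{a:M1} holds. The initial condition \eqref{eq:INITUNI} for $l=1$ together with the definition \eqref{eq:ADAPTSPAC} via the accuracy criterion \eqref{eq:MLCRIT} and the Bochner monotone convergence implied by $k_l(\omega)<\infty$ (which follows from Assumption~\ref{ass:MEASADAPT}) then yield Assumption~\ref{a:M2}; here one uses that $\|u-u^{(k_l(\omega))}(\omega)\|_H \leq \tfrac{1}{2\sqrt{2}}Tol_l$ is a pathwise pointwise bound that integrates to the required $L^2(\Omega;H)$ estimate.

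Next I would verify the algorithmic and complexity assumptions. Assumption~\ref{a:M3} is obtained by running any convergent iterative solver preserving measurability (e.g., a Gauss--Seidel type or monotone multigrid iteration composed of arithmetic and $\max$ operations) for a sufficient number of steps to drive the pathwise error below $\tfrac{1}{2\sqrt{2}}Tol_l$; this was discussed right before the theorem. Assumption~\ref{a:M4} is then provided by STDMMG with $\mu=4$ for $d=1$ and $\mu=5$ for $d=2$, with a constant $c_0$ independent of $\omega$. Finally, Assumption~\ref{ass:QUASIOPTREF} is precisely Assumption~\ref{a:M5} with $s=d$ and constant $c_1$.

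With all hypotheses verified, I would apply Theorem~\ref{thm:MLMCEFFORT} with $s=d$. For $d=1$ (i.e.\ $s<2$) one chooses $M_1$ by \eqref{eq:M1} and $M_l$, $l\geq 2$, by \eqref{eq:MS1}, producing $c_s=0$. For $d=2$ (i.e.\ $s=2$) one uses \eqref{eq:M1} together with \eqref{eq:MS2}, producing $c_s=2$. Substituting $s=d$ and the corresponding $(c_s,\mu)$ into the cost bound \eqref{eq:LMCCOST} of Theorem~\ref{thm:MLMCEFFORT}, together with the accuracy \eqref{eq:MLMCTOL}, yields \eqref{eq:MMCTOL} and the claimed cost estimate. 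The only step requiring mild care is that the logarithmic factor $(1+|\log(Tol_1)|)^\mu$ in \eqref{eq:LMCCOST} is written here as $(1+d|\log Tol_1|)^\mu$; this is a harmless adjustment of the constant $C$, since $d\in\{1,2\}$ is fixed, and absorbs the $\log c_1$ term that arises when bounding $(1+\log N_l(\omega))^\mu \leq (1+\log(c_1 Tol_l^{-d}))^\mu$ as in the proof of Theorem~\ref{thm:MLMCEFFORT}. The main conceptual point is not a difficult calculation but rather the observation that all pathwise bounds used in the verification of Assumptions~\ref{a:M2}--\ref{a:M5} are uniform in $\omega$, which is exactly what Assumptions~\ref{ass:MEASADAPT} and~\ref{ass:QUASIOPTREF} are designed to provide.
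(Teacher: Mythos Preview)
Your proposal is correct and follows essentially the same approach as the paper: the paper's proof consists of the single sentence that the result is a direct consequence of Theorem~\ref{thm:MLMCEFFORT}, relying on the preceding text to have verified Assumptions~\ref{a:VI} and~\ref{a:M1}--\ref{a:M5} exactly as you outline (with $s=d$ from Assumption~\ref{ass:QUASIOPTREF} and $\mu\in\{4,5\}$ from STDMMG). Your additional remarks on integrating the pathwise bound~\eqref{eq:MLCRIT} to obtain Assumption~\ref{a:M2} and on absorbing the $d$-dependence of the logarithmic factor into $C$ are accurate elaborations of details the paper leaves implicit.
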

In fact, one could chose  
% $M_l$, $l=1,\dots,L$, to be the smallest numbers such that 
% \eqref{eq:M1} holds for $l=1$
% and \eqref{eq:MS1}  and \eqref{eq:MS2} holds for  $l=2,\dots,L$
$M_1$ according to \eqref{eq:M1} and $M_l$, $l=2,\dots,L$,
according to \eqref{eq:MS1}  and \eqref{eq:MS2} for  $d=1$ and $d=2$, respectively.
}

\section{Numerical Experiments} \label{sec:NUM}
%Purpose
In this section we investigate the adaptive MLMC finite element approach presented 
in the preceding sections from a numerical perspective.
% MLMC parameters: MLMC tolerances, MLMC levels l=1,...L, and  and number of samples M_l
\MD{We use the algorithm proposed by Giles~\cite[Algorithm 1]{Giles2015}
(see also \cite{giles2008}).
Here, the increment of the  number of levels
is associated with uniform mesh refinement for uniform MLMC
and an update of the stopping criterion for adaptive MLMC  to be specified later.
We slightly modified the computation of the optimal number of realizations on each level
by replacing the cost of an individual realization by
the average of the cost of all realizations on the same  level.
In our computations, we used a minimal number $M_{min}$ of samples
setting $M_{min}=100$  for the Poisson problem (cf. Subsection~\ref{subsec:POISSON}) 
and $M_{min}=50$ for the obstacle problem (cf. Subsection~\ref{subsec:OBSTACLE}).} 

The initial accuracy condition \eqref{eq:INITUNI} is addressed by formally setting 
\MD{
\begin{equation} \label{eq:INITOL}
Tol_1 = 2\sqrt{2}C_{est}   \|\err^{(1)}\|_{L^2(\Omega)}.
\end{equation}
with the $L^2(\Omega)$-norm approximated by a Monte-Carlo method with 1000 samples.
We choose  $Tol_l$ according to \eqref{eq:MLTOL} with  $q=\frac{1}{2}$.
% accuracy criterion for MLMC levels
The accuracy criterion \eqref{eq:MLCRIT} is replaced by the approximation
\begin{equation}\label{eq:ac_crit}
\err^{(k_l(\omega))}(\omega) \leq { \textstyle \frac{1}{2\sqrt{2}C_{est}}} Tol_l 
  = q^{l-1} \|\err^{(1)}\|_{L^2(\Omega)}
\end{equation}

which is used as stopping criterion on each level in adaptive MLMC.
Note that the unknown constant $C_{est}$ does not appear in our computations.
Both uniform and adaptive MLMC terminate once the stopping criterion
in Giles' algorithm is met.
}

% adaptive refinement strategy*********************************************************
% error indicators
Pathwise adaptive refinement is performed as suggested by Siebert and Veeser~\cite{SiebertVeeser07} with error indicators $\err_t(\omega)$ given by local contributions to the hierarchical error estimator
according to~\cite[Theorem 3.5]{VeeserEtAl}.
Here,  the exact finite element solution is replaced by an approximation
provided by an iterative method to be described below.
In the unconstrained case $K=H$, this approach
is reducing to the classical hierarchical error estimation
(cf., e.g., \cite{bornemann:1996,deuflhard89} or \cite[Section 6.1.4]{weiser}).
Note that the error is estimated in the energy norm.
% mesh refinement
We use 
D\"orfler marking~\cite{Doerfler96} with $\theta = 0.4$ for the Poisson problem (cf. Subsection~\ref{subsec:POISSON}) 
and $\theta = 0.2$ for the obstacle problem (cf. Subsection~\ref{subsec:OBSTACLE})  together with
local ``red'' mesh refinement~\cite{BankShermanWeiser1983,Bey2000,bornemann:1993}
with hanging nodes~\cite[Section 3.1]{Graeser11}.
Implementation is carried out in the finite element software environment  
{\sc Dune}~\cite{Dune}
involving the \texttt{dune-subgrid} module~\cite{GraeserSander09}
for the evaluation of the sum of different approximate evaluations of $u_{l,i}(\omega)$ on different grids.

% algebraic solution*********************************************************
% stopping criterion
Discretized variational inequalities of the form  \eqref{eq:AUXPROB}
are solved iteratively by truncated non-smooth Newton multigrid methods
(TNNMG)~\cite{Graeser11,GraeserKornhuber09} with nested iterations,
because TNNMG is easier to implement and usually converges faster than STDMMG~\cite{Graeser11}. 
Numerical experiments (see, e.g., \cite[Section~5.]{stochvi}) also indicate that TNNMG satisfies Assumption~\ref{a:M4} with $\mu = 0$.
Note that both STDMMG and TNNMG reduce to  classical multigrid with Gau\ss -Seidel smoothing in the unconstrained case  $K=H$.
The accuracy condition \eqref{eq:ALGACC} is replaced by
the uniform stopping criterion
\[
\| u^{(k)}_{\nu + 1} - u^{(k)}_{\nu} \|_H \leq {\textstyle \frac{1}{2\sqrt{2}}} \sigma_{alg} Tol_l
\]
with $u^{(k)}_{\nu}$ denoting the $\nu$-th iterate and
a safety factor $\sigma_{alg}=0.001$ accounting for estimating the algebraic error $\| u^{(k)}- u^{(k)}_{\nu} \|_H$ by $\| u^{(k)}_{\nu + 1} - u^{(k)}_{\nu} \|_H$.
% computational effort
In view of the above mentioned optimal convergence properties of TNNMG, 
\MD{the cost for the evaluation of $\tilde{u}_l(\omega) \in S_l(\omega)$ 
is set to the corresponding number of unknowns $N_l(\omega) = \text{dim }S_l(\omega)$, i.e.,
\[
cost(\tilde{u}_l(\omega)) = N_l(\omega).
\]
In light of \eqref{eq:COSTDEF}, 
the computational cost for the adaptive MLMC method with $L$ levels 
is then given by
\begin{equation}\label{cost}
cost_L = \sum_{l=1}^L\; \sum_{i=1}^{M_l}  N_{l,i}(\omega),
\end{equation}
which  reduces to $cost_L=  \sum_{l=1}^L N_l M_l$ in case of uniform refinement.}

%%%%%%%%%%%%%%%%%%%%%%%%%%%%%%%%%%%%%%%%%%%%%%%%%%%%%%%%%%%%%%

\subsection{Poisson equation with random right-hand side}  \label{subsec:POISSON}% in $d=2$ space dimensions}

% Problem statement
We consider the Poisson problem
\begin{equation} \label{pathAFFIN}
u(\omega)\in \{w \in H^1(D)\;|\; w|_{\partial D}=g(\omega)\}:\quad
a(\omega;u(\omega),v)= \ell(\omega;w) \qquad \forall v \in H_0^1(D)
\end{equation}
with $D=(-1,1)^2$ in $d=2$ space dimensions, the bilinear form 
\begin{equation}
a(\omega;v,w) = \int_D \nabla v\cdot\nabla w \; dx, \quad v,w \in H,
\end{equation}
the right hand side
\begin{equation}
\ell(\omega;v) = \int_Df(x,\omega)v \; dx, \quad v \in H,
\end{equation}
with uncertain source term
\begin{equation}
f(x, \omega) = e^{-\beta |x-Y(\omega)|^2}(4\beta^2|x-Y(\omega)|^2-4\beta),
\end{equation}
and uncertain, inhomogeneous boundary conditions
\[
g(x,\omega) = e^{-\beta|x-Y(\omega)|^2}\quad x \in \partial D.
\]
Here, $\beta$ is a positive constant
and $Y(\omega) = (Y_1(\omega), Y_2(\omega))^T$  is a random vector 
whose components are uniformly distributed random variables 
$Y_1, Y_2 \sim {\mathcal U}(-0.25,0.25)$. 
For each $\omega \in \Omega$ a pathwise solution of \eqref{pathAFFIN}  is given by
\begin{equation}
u(x, \omega) = e^{-\beta|x-Y(\omega)|^2}, \quad x \in D.
\label{laplace_solution}
\end{equation}
% Existence, uniqueness and regularity of the problem
As Assumption \ref{a:coeff} is satisfied, this solution is unique and we have spatial regularity
in the sense that $u \in L^2(\Omega; H^2(D))$ (cf.~Assumption~\ref{a:lipsch}).
However, $u(\omega)$ exhibits a peak at  $(Y_1(\omega), Y_2(\omega))\in D$ 
that becomes more pronounced with increasing $\beta$,
thus leading to larger constants  $C_0$ in the uniform error estimate~\eqref{eq:PMLFEACC}.

%Discretization according to preceding section
We will compare the performance of MLMC finite element methods
based on uniform and adaptive refinement, as presented in the preceding Section~\ref{sec:MLMCFEM},
for $\beta = 10$, $50$, $150$.
The initial partition $\cT^{(1)}$ is obtained by applying four uniform refinement steps to
the partition of the unit square $\overline{D}$ into two congruent triangles with right angles at $(1,-1)$ and  $(-1,1)$.

 %Convergence of uniform and adaptive method
% Theoretical results applicable?
% Numerical results versus theoretical results: Error
% 
Figure~\ref{fig: lapl_er} illustrates the convergence properties of uniform and adaptive MLMC methods for the different values of $\beta$
\MD{by showing the actually achieved error over the inverse of the required tolerance $Tol$.}
Here,  the  error $\|\E[u]-\E^L[\tilde{u}_L]\|_{L^2(\Omega; H^1(D))}$ is approximated by a Monte Carlo method
utilizing $M=5$ independent realizations $\|\E[u]-\E^L[\tilde{u}_L]\|_{H^1(D)}$. 
 For all values of $\beta$, both uniform and adaptive MLMC match the required accuracy $Tol$ 
 as indicated by the dotted line,
 thus nicely confirming our theoretical results (cf.~Theorem~\ref{pr:MLMCFEER} and \ref{thm_ADAPT})
 also in this slightly more general case of random boundary conditions.
 \MD{Due to  limited memory resources 
 the accessible accuracy of uniform MLMC 
 is exceeded by adaptive MLMC for $\beta = 50, 150$}
 \begin{figure}
\centering
\input{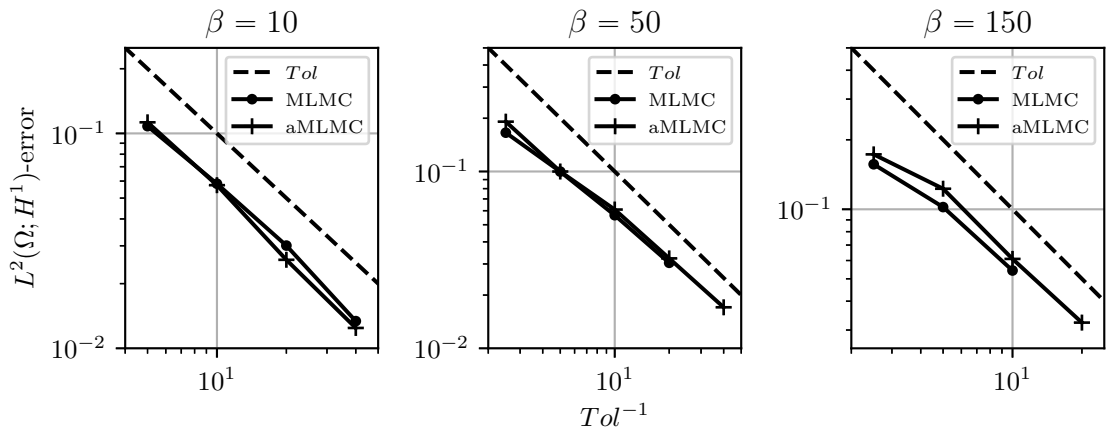}
\caption{Error achieved by uniform and adaptive MLMC over the inverse of
required accuracy $Tol$ for the Poisson problem.}
\label{fig: lapl_er}
\end{figure}
 
% Computational Effort I: Number of Samples
\begin{figure}
\centering
\input{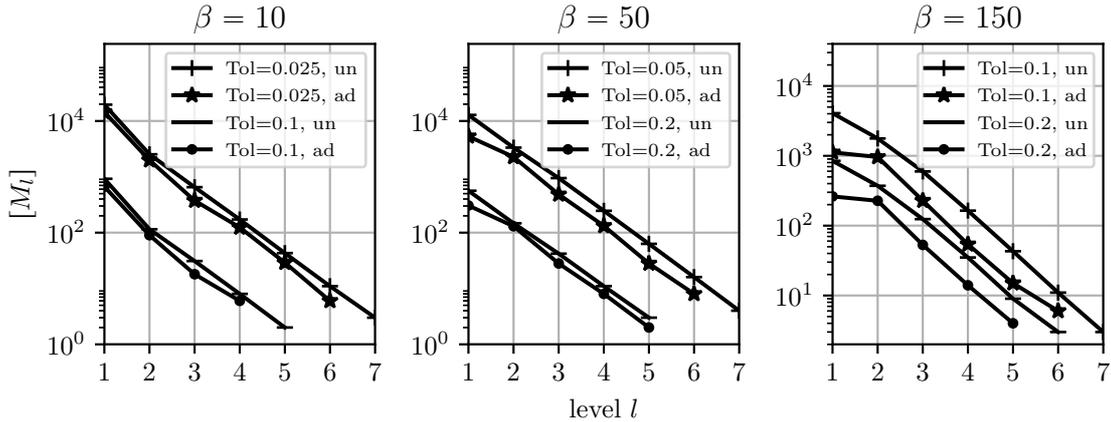}
\caption{Average optimal number of samples over levels  for uniform  (un) and adaptive (ad) MLMC,
different values of $Tol$, and the Poisson problem.}
\label{fig: lapl_samp}
\end{figure}
\MD{We now investigate the corresponding computational effort in terms of required number of samples and mesh size.
Figure~\ref{fig: lapl_samp} shows the average 
numbers of optimal MLMC samples $M_l$ (sometimes smaller than $M_{min}$) over the corresponding  levels $l=1,\dots, L$
for different values of $\beta$ and $Tol$. 
The average is taken over the $M=5$ realizations of  $\|\E[u]-\E^L[\tilde{u}_L]\|_{H^1(D)}$.
It is interesting that the number of samples required for adaptive MLMC is always smaller than for uniform MLMC 
and that the difference becomes larger for larger $\beta$. 
Moreover, adaptive MLMC often  requires less levels than uniform MLMC. } 
% Computational Effort II: Number of Unknowns
\begin{table}[]
\centering
\caption{Average  number of unknowns on different levels for the Poisson problem.
}
\label{tab: dof_lap_10}
\begin{tabular}{|l|rrrrrr|}
\hline
$l$ & 1 & 2 & 3 & 4 & 5 & 6 \\ \hline
uniform & 289 & 1089 & 4225 & 16641 & 66049 & 263169\\
adaptive, $\beta = 10$& 289 & 965 & 3339 & 11719 & 56087 & 218507 \\
adaptive, $\beta = 50$& 289 & 508 & 1017 & 5701 & 16901 & 49895 \\ 
adaptive, $\beta = 150$& 289 & 385 & 929 & 2730 & 6938 & 19606 \\ \hline
\end{tabular}
\end{table}
\MD{Table \ref{tab: dof_lap_10} reports on the average mesh sizes
or, equivalently, the average of the number of the unknowns
$N_{l,i}(\omega)$, $i=1,\dots,M_l$,
on the levels $l=1,\dots,7$ for uniform and adaptive MLMC up to tolerances  $0.025$, $0.05$, and $0.1$ for $\beta=10$, $50$, and $100$, respectively.
Note that adaptive MLMC reached the desired tolerances already on level $L=6$.}
While for $\beta = 10$ the corresponding uniform and adaptive mesh sizes 
 stay relatively close to each other, the mesh sizes for adaptive MLMC 
 for $\beta = 50$, $150$ are considerably smaller than for uniform MLMC.
Even though most of the work in MLMC methods is performed on coarser levels, 
this already indicates a gain of efficiency by adaptive mesh refinement.

%Numerical validation of Assumption~\ref{a:M5} 
\begin{figure}
\centering
\input{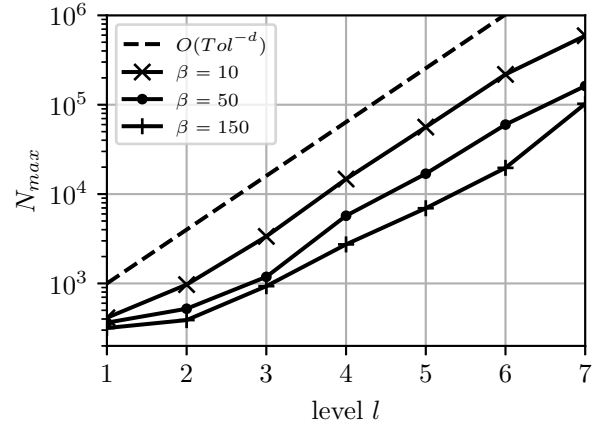}
\caption{Number of  unknowns providing  the accuracy ${\mathcal O}(Tol_l)$  
over  levels $l$ for the Poisson problem.}
\label{fig: lapl_dof}
\end{figure}
Upper bounds of the computational cost of MLMC in terms of the desired accuracy $Tol$
as stated in  Theorem~\ref{thm:MLMCEFFORT}
strongly rely on  Assumption~\ref{a:M5} postulating  $N_l(\omega)={\mathcal O}(Tol_l^{-s})$.
While, under suitable regularity conditions, Assumption \ref{a:M5}  holds with $s=d$ for uniform MLMC,
there is no theoretical evidence yet for adaptive MLMC.
In order to check Assumption \ref{a:M5} for adaptive MLMC numerically, 
\MD{we adaptively computed approximations to  realizations of 
$u_{l,i}(\omega)$, $i=1,\dots,I=1000$, 
up to the tolerance $\frac{1}{2\sqrt{2}C_{est}}Tol_l$ 
according to the stopping criterion \eqref{eq:ac_crit} 
for $l=1,\dots, 7$, and  $\beta = 10$, $50$, $150$.
Figure~\ref{fig: lapl_dof} displays 
the maximal required number of unknowns $N_{l,max}=\max_{i=1,\dots,I}N_{l,i}(\omega)$ over the the number of levels $l=1,\dots,7$.
We observe that $\log(N_{l,max})$ grows like $2\log(q)(l-1)$ (dotted line)
or, equivalently,  $N_{l,max} =  {\mathcal O}(Tol_l^{-2})$ 
for all three values of $\beta$.
This indicates that  adaptive MLMC satisfies Assumption~\ref{a:M5} with $s=d=2$.}

% Computational cost of uniform and adaptive method
% Theoretical results applicable?
% Numerical results versus theoretical results: Computational Cost
\begin{figure}
\centering
\input{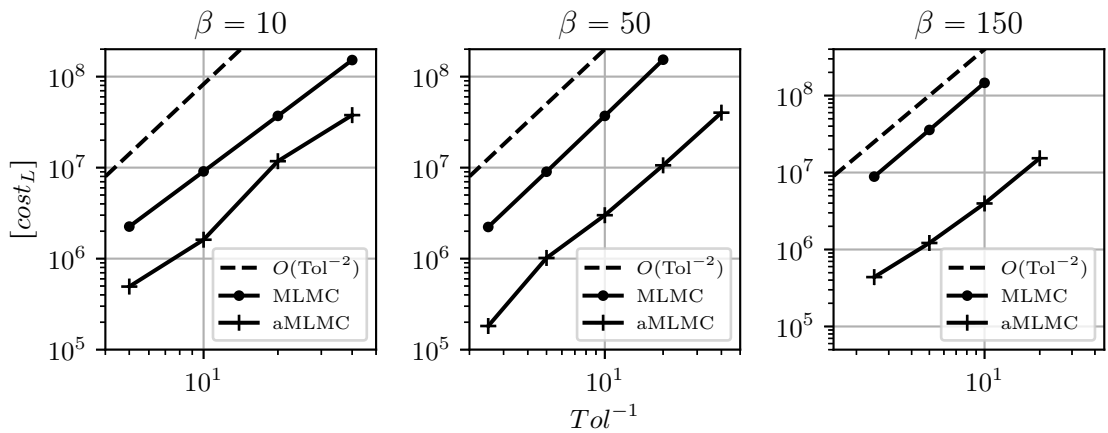}
\caption{Average computational cost of uniform and adaptive MLMC over the 
inverse of required accuracy $Tol$ for the Poisson problem.}
\label{fig: lapl_com}
\end{figure}
On this background, we expect from  Theorem~\ref{thm:MLMCEFFORT} that the computational cost
both of uniform and adaptive MLMC should  asymptotically behave like ${\mathcal O}(Tol^{-2})$.
Figure~\ref{fig: lapl_com} shows the average 
\MD{of $cost_L$, as defined in \eqref{cost}, over the inverse of the required accuracy $Tol$ 
together with the expected asymptotic behavior (dotted line).
As in Figure~\ref{fig: lapl_er}, the average is taken over the $M=5$ realizations of  $\|\E[u]-\E^L[\tilde{u}_L]\|_{H^1(D)}$.
Observe that adaptive MLMC always outperforms uniform MLMC and  the gain is increasing with increasing $\beta$. 
Though the simple model  of  computational cost   \eqref{cost} is frequently used,
it obviously ignores a posteriori error estimation, mesh handling,  interpolation, etc.,
which does occur in adaptive MLMC but not in the uniform case.
We therefore complement our considerations  by a  comparison of the overall run time 
on the machine with 3.3 GHz Intel Xeon E3-1245 processor
with the 7.8 GByte of RAM for different tolerances $Tol$ and different values of $\beta$.
We found that the overall run time  to reach the tolerance, $Tol= 0.025$, $0.05$, and $0.1$
by uniform MLMC was improved by a factor of  $1.1$, $3.2$, and $4.6$  by adaptive MLMC
for  $\beta = 10$, $50$, and $150$, respectively. 
These experiments confirm that uniform MLMC is preferable for sufficiently smooth problems 
while, even without specific software optimization,
adaptive MLMC can substantially reduce the computational cost 
in the presence of random singularities.}

%%%%%%%%%%%%%%%%%%%%%%%%%%%%%%%%%%%%%%%%%%%%%%%%%%%%%%%%%%%%%%

\subsection{Obstacle problem with random  diffusion coefficient and right-hand side } \label{subsec:OBSTACLE}

We consider an elliptic variational inequality of the form \eqref{path} with $D=(0,1)$ in  $d=1$ space dimension,
\[
K= \{v \in H\;|\; v(x) \geq 0 \text{ a.e. in }D \}\subset H, \quad H=H_0^1(D),
\]
the bilinear form 
\begin{equation}
a(\omega;v,w) = \int_D \alpha (x,\omega)\nabla v\cdot\nabla w \; dx, \quad v,w \in H,
\end{equation}
with random  diffusion coefficient
\begin{equation} \label{eq:DIFFCOBS}
\alpha(x,\omega)=1+\frac{\cos x^2}{10}Y_1(\omega) + \frac{\sin x^2}{10} Y_2(\omega),
\end{equation}
and the right hand side
\begin{equation}
\ell(\omega;v) = \int_Df(x,\omega) \; dx, \quad v \in H,
\end{equation}
with random  source term 
\[
f(x,\omega) = 
\left \{
\begin{array}{ll}
\begin{array}{l}
-8 e^{2(Y_1(\omega) + Y_2(\omega))}\left( a(x,\omega)\cdot (3x^2-r^2) \right.
\\[2mm]
\quad \left. + (x^2-r^2)x^2 \left(-\frac{\sin x^2}{10} Y_1(\omega) 
+ 
\frac{\cos x^2}{10}Y_2(\omega)\right)\right)
\;,
\end{array} & x>r \\[6mm]
\begin{array}{l}
4 r^2 e^{2(Y_1(\omega) + Y_2(\omega))} \left(a(x,\omega)\cdot (-1-r^2+x^2) \right.
\\[2mm]
\quad \left. + (-2-2r^2+x^2)x^2\left(-\frac{\sin x^2}{10}Y_1(\omega) 
+
\frac{\cos x^2}{10}Y_2(\omega)\right)  \right) ,
\end{array} & x\leq r
\end{array}
\right .
 \]
denoting 
\[
r=r(Y_1(\omega), Y_2(\omega)) = 0.7+\frac{Y_1(\omega)+ Y_2(\omega)}{10}
 \;.
\]
Here, $Y_1,Y_2\sim {\mathcal U}(-1,1)$ stand for uniformly distributed random variables. 
For each $\omega \in \Omega$ a solution of the corresponding pathwise problem \eqref{path}  is given by
\[
u(x, \omega) = \max{\{(x^2-r^2)e^{Y_1(\omega)+Y_2(\omega)},0\}}^2, \quad x \in D.
\]
As Assumption \ref{a:coeff} is satisfied, this solution is unique and we have $u \in L^2(\Omega;H)$.

We will compare the numerical behavior of  MLMC finite element methods 
with uniform and adaptive spatial mesh refinement
as presented in  Section~\ref{sec:MLMCFEM}.
The initial partition $\cT^{(1)}$  of $\overline{D}=[0,1]$ consists of sixteen closed intervals with length 1/16.

%Convergence of uniform and adaptive method
% Theoretical results applicable?
% Numerical results versus theoretical results
\begin{figure}
\centering
\input{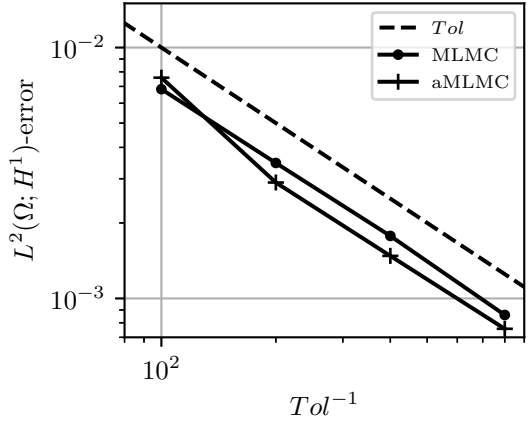}
\caption{Error achieved by uniform and adaptive MLMC over the inverse of required accuracy $Tol$ for the obstacle problem.}
\label{fig: obst_er}
\end{figure}
Figure~\ref{fig: obst_er} shows the error $\|\E[u]-\E^L[\tilde{u}_L]\|_{L^2(\Omega; H^1(D))}$
of uniform and adaptive MLMC over $Tol^{-1}$.
As in the previous numerical experiment,  the exact error $\|\E[u]-\E^L[\tilde{u}_L]\|_{L^2(\Omega; H^1(D))}$ 
is approximated by a Monte Carlo method
utilizing $M=5$ independent realizations $ \|\E[u]-\E^L[\tilde{u}_L]\|_{H^1(D)}$. 
As expected from Theorem~\ref{pr:MLMCFEER} and \ref{thm_ADAPT},
both for uniform and adaptive MLMC the error is bounded 
by the prescribed tolerance $Tol$ indicated by the dotted line.
 Adaptive MLMC appears to be slightly more accurate than the uniform version. 

% computational effor in terms of the required number of samples and mesh size.
%Average numer of samples
\begin{figure}
\centering
\input{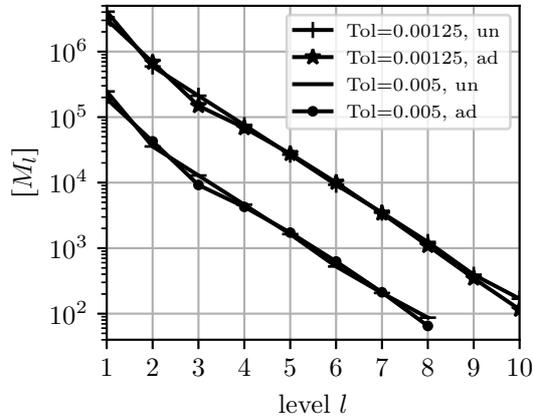}
\caption{Average optimal number of samples  over levels  for uniform  (un) and adaptive (ad) MLMC, different values of $Tol$, and the obstacle problem.}
\label{fig: obst_samp}
\end{figure}
Next, we consider  the  required number of samples and mesh size.
\MD{The average optimal number of  MLMC samples $M_l$
over the corresponding  levels $l=1,\dots, L$ 
are shown in Figure~\ref{fig: obst_samp} for different values of $Tol$.  
Again, the numbers of samples for adaptive MLMC are slightly smaller than for the uniform method. }
% Average mesh size
\begin{table}[]
\centering
\caption{Average  number of unknowns on different levels for the obstacle problem.}
\label{tab: dof_obs}
\begin{tabular}{|r|rrrrrrrrrr|}
\hline
$l$ & 1 & 2 & 3 & 4 & 5 & 6 & 7 & 8 & 9 & 10\\ \hline
uniform & 17 & 33 & 65 & 129 & 257 & 513 & 1025 & 2049 & 4097 & 8193 \\
adapted & 17 & 19 & 24 & 34 & 57 & 106 & 207 & 443 & 927 & 2150\\ \hline
\end{tabular}
\end{table}
\MD{
The average mesh size  or, equivalently, the average of the number of unknowns
$N_{l,i}(\omega)$, $i=1,\dots,M_l$ on the levels  $l=1,\dots,10$ 
for prescribed tolerance $0.00125$ 
is reported in Table \ref{tab: dof_obs}.
The uniform mesh size on the final level $L=10$  
is about 3.8 times larger than for adaptive MLMC
indicating  the potential of the adaptive approach.}

% Numerical assessment of Assumption~\ref{a:M5}
\begin{figure}
\centering
\input{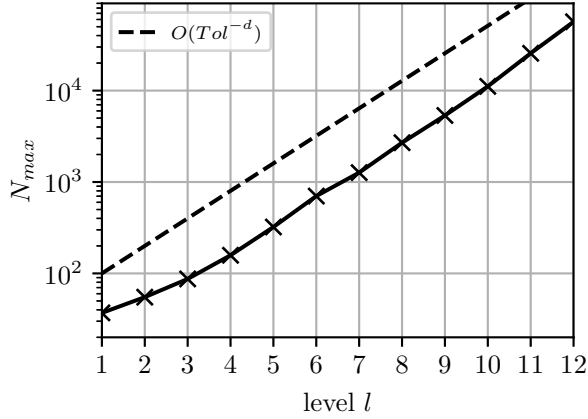}
\caption{Number of  unknowns providing  the accuracy ${\mathcal O}(Tol_l)$  
over  levels $l$ for the obstacle problem.}
\label{fig: obst_dof}
\end{figure}
As the given data clearly satisfy Assumption~\ref{a:lipsch}, the general Assumption~\ref{a:M5}
holds true for uniform MLMC. Hence, Theorem~\ref{eq:MLMCFEEFFORT} provides  the upper bound ${\mathcal O}(Tol^{-2})$ 
for the computational cost of uniform MLMC.
As corresponding theoretical evidence is still missing for adaptive MLMC, we check Assumption \ref{a:M5} numerically.
\MD{To this end, we adaptively computed approximations to  realizations of $u_{l,i}(\omega)$, $i=1,\dots,I=1000$,
up to the tolerance $\frac{1}{2\sqrt{2}C_{est}}Tol_l$ 
according to the stopping criterion \eqref{eq:ac_crit} for $l=1,\dots, 12$.
Figure~\ref{fig: obst_dof} displays $N_{l,\text{max}}=\max_{i=1,\dots,I}N_{l,i}(\omega)$ 
over the number of levels $l$.
We observe that $\log(N_{l,max})$ grows like $\log(q)(l-1)$ (dotted line)
or, equivalently,  $N_{l,max} =  {\mathcal O}(Tol_l^{-1})$ 
indicating that  adaptive MLMC satisfies Assumption~\ref{a:M5}  with $s=d=1$.}

% Computational cost of uniform and adaptive method
% Theoretical results applicable?
% Numerical results versus theoretical results
\begin{figure}
\centering
\input{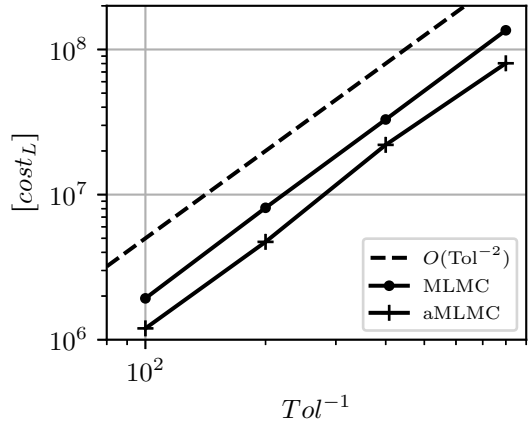}
\caption{Average computational cost of uniform and adaptive MLMC over the
inverse of required accuracy $Tol$ for the obstacle problem.}
\label{fig: obst_com}
\end{figure}
From Theorem~\ref{eq:MLMCFEEFFORT} and Theorem~\ref{thm:MLMCEFFORT},
combined with numerical evidence of Assumption~\ref{a:M5},
we expect that the computational cost both of uniform and adaptive MLMC asymptotically behaves like ${\mathcal O}(Tol^{-2})$.
This is confirmed by \MD{Figure~\ref{fig: obst_com}}  showing 
the  average computational cost over the inverse of \MD{the required  accuracy $Tol$} 
together with the expected asymptotic behavior (dotted line).
\MD{Again, the average is taken over the $M=5$ realizations of  $\|\E[u]-\E^L[\tilde{u}_L]\|_{H^1(D)}$.}
\MD{We observe a gain of efficiency of adaptive MLMC by a factor of 1.75 as compared to the uniform version.}

\MD{We also measured the overall run time on the machine with 
3.3 GHz Intel Xeon E3-1245 processor with the 7.8 GByte of RAM 
for the final tolerance $Tol=0.00125$
and found that (for the given implementation) the overall run time is not improved
by adaptive refinement.}
%
%\section{Discussion}
%
%In this work we introduced and investigated adaptive Multi-Level Monte Carlo algorithm, which is in its turn a generalization of well established and widely used Multi-Level Monte Carlo method. 
%The proposed method combines ideas of adaptive solution of PDEs based on a posteriori error estimation and MLMC method for solving PDEs with random input data. 
%While the classical MLMC method is well suitable for problems with smooth solutions without sharp changes in properties, adaptive MLMC method is a good choice for problems with local singularities or problems which solution's properties drastically change locally and adaptive refinement can speed up numerical computations. We considered two example problems in which the areas of computational domains where specific features of solutions appear depend on the random variables, and therefore pathwise refinement has its advantages, and showed that adaptive MLMC provides a considerable gain in computational cost compared to classic MLMC method.      
%
\bibliography{ref}{}
\bibliographystyle{plain}

\end{document}